\newtheorem{thm}{Theorem}
\newtheorem{prop}[thm]{Proposition}
\newtheorem{lem}[thm]{Lemma}
\newcommand{\za}{\alpha}
\newcommand{\zd}{\delta}
\newcommand{\ze}{\epsilon}
\newcommand{\zg}{\gamma}
\newcommand{\zs}{\sigma}
\newcommand{\zS}{\Sigma}
\newcommand{\cala}{\mathcal{A}}
\newcommand{\calu}{\mathcal{U}}
\newcommand{\calg}{\mathcal{G}}
\newcommand{\bx}{{\bf x}}
\def\B{\mathcal{B}}
\def\Ddots{\mathinner{\mkern1mu\raise\p@
\vbox{\kern7\p@\hbox{.}}\mkern2mu
\raise4\p@\hbox{.}\mkern2mu\raise7\p@\hbox{.}\mkern1mu}}
\begin{document}

\title{On cluster algebras from unpunctured surfaces with one marked point}
\author{Ilke Canakci}
\address{Department of Mathematics, 
University of Leicester,
University Road, 
Leicester LE1 7RH, 
United Kingdom}
\email{ic74@le.ac.uk}
\author{Kyungyong Lee}
\address{Department of Mathematics, Wayne State University, Detroit, MI 48202, USA\\
{ -- Center for Mathematical Challenges, Korea Institute for Advanced Study, 85 Hoegiro, Dongdaemun-gu
Seoul 130-722
Republic of Korea}}
\email{klee@math.wayne.edu \\
{klee1@kias.re.kr}}
\author{Ralf Schiffler}\thanks{The first author was {supported by by EPSRC grant number EP/K026364/1, UK} and the University of Leicester, {the second author was supported by  Wayne State University and the Korea Institute for Advanced Study,} and the last author was supported by the NSF grants DMS-1254567, DMS-1101377 and by the University of Connecticut.}
\address{Department of Mathematics, University of Connecticut, 
Storrs, CT 06269-3009, USA}
\email{schiffler@math.uconn.edu}

\maketitle
\begin{abstract}{We extend the construction of canonical bases for cluster algebras from unpunctured surfaces to the case where the number of marked points  on the boundary is one, and we show that the cluster algebra is equal to the upper cluster algebra in this case.}
\end{abstract}
\section{Introduction}
Cluster algebras were introduced in \cite{FZ1}, and further developed in \cite{FZ2,BFZ,FZ4}, motivated by combinatorial aspects of canonical bases in Lie theory \cite{L1,L2}. A cluster algebra is a subalgebra of a field of rational functions in several variables, and it is given by constructing a distinguished set of generators, the \emph{cluster variables}. These cluster variables are constructed recursively and their computation is rather complicated in general. By construction, the cluster variables are rational functions, but Fomin and Zelevinsky showed in \cite{FZ1} that they are Laurent polynomials with integer coefficients. Moreover, these coefficients are known to be non-negative \cite{LS4}.

An important class of cluster algebras is given by cluster algebras of surface type \cite{GSV,FG1,FG2,FST,FT}. From a classification point of view, this class is very important, since it has been shown in \cite{FeShTu} that almost all (skew-symmetric) mutation finite cluster algebras are of surface type. For generalizations to the skew-symmetrizable case see \cite{FeShTu2,FeShTu3}. The closely related surface skein algebras were studied in \cite{M,T}.

If $\cala$ is a cluster algebra of surface type, then there exists a surface with (possibly empty) boundary and marked points such that the cluster variables of $\cala$ are in bijection with certain isotopy classes of curves, called \emph{arcs}, in the surface. {Marked points in the interior of the surface are called \emph{punctures}, and the surface is \emph{unpunctured} if all  marked points lie on the boundary}. Moreover, the relations between the cluster variables are given by the crossing patterns of the arcs in the surface. In \cite{MSW}, building on earlier work \cite{S2,ST,S3,MS}, the authors gave a combinatorial formula for the cluster variables in cluster algebras of surface type. In the sequel \cite{MSW2}, the formula was the key ingredient for the construction of two bases for the cluster algebra, in the case where the surface has no punctures and has at least 2 marked points.

\subsection{Bases} Our main result, Theorem \ref{thm 1}, shows that the basis construction of \cite{MSW2} also applies to surfaces {without punctures} and with exactly one marked point. 
The proof of this result consists in showing that the Laurent polynomials associated to the essential loops in the surface  are elements of the cluster algebra. This is shown by exhibiting certain identities in the cluster algebra, that allow to write the Laurent polynomials in question as polynomials in cluster variables. 
The main ingredients for the proof of these identities are the snake graph calculus developed in {{\cite{CS,CS2,CS3}}}, and
the skein relations proved in \cite{MW}.

\subsection{Upper cluster algebras} As an application, we study the relationship between cluster algebras and upper cluster algebras. 
To define a cluster algebra $\cala$, one needs to specify an initial seed $\zS=(\mathbf{x},B)$ consisting of a cluster $\mathbf{x}$ and an exchange matrix $B$. In \cite{BFZ}, the authors introduced the  concept of the upper cluster algebra $\calu$ associated to the seed $\zS$. 
 Recall that  ${\calu}$ consists of all elements of the ambient field {$\mathbb{Q}(x_1,\ldots,x_n)$} which are Laurent polynomials over $\mathbb{Z}$ in the cluster variables from any seed in $\cala$.
It follows directly from the Laurent phenomenon that the upper cluster algebra contains the cluster algebra as a subalgebra. However, the question whether the cluster algebra is equal to the upper cluster algebra is subtle and does not have a uniform answer. Already in \cite{BFZ} it is shown that $\cala=\calu$ for all acyclic types and, on the other hand, $\cala\ne\calu$ for the rank 3 case given by the once-punctured torus.
Muller introduced the notion of locally acyclic cluster algebras in \cite{M2} and showed that $\cala=\calu$ for all locally acyclic types {in \cite{M3}}. It is shown in \cite{MSp}, that  cluster algebras  of Grassmannians are locally acyclic, hence $\cala=\calu$ in this case. {Goodearl and Yakimov announced that $\cala=\calu$ for double Bruhat cells \cite{GY}.}

For cluster algebras of finite mutation type, the following results were known. 
\bigskip
\begin{enumerate}
\item 
{ $\cala=\calu$ in the following types: }

\begin{itemize}
\item[-] surfaces with non-empty boundary and at least two marked points. This has been shown in \cite{MSW2} for  unpunctured surfaces, and in \cite{M2} if  at least two marked points are on the boundary. The case where only one point is on the boundary can be reduced to the case with two marked points on the boundary {using the Louise property of \cite{MSp}}, see Proposition \ref{prop 4}.
\item[-]  the exceptional types $\mathbb{E}_6,\mathbb{E}_7,\mathbb{E}_8,\widetilde{\mathbb{E}}_6,\widetilde{\mathbb{E}}_7,\widetilde{\mathbb{E}}_8$,  by \cite{BFZ}, since these types are acyclic.
\item[-]  the exceptional types  $\widetilde{\widetilde{\mathbb{E}}}_6,\widetilde{\widetilde{\mathbb{E}}}_7,\widetilde{\widetilde{\mathbb{E}}}_8$ and $\mathbb{X}_6$, {by \cite{M}, since they are locally acyclic}, or by \cite{MSp}, since they satisfy the Louise property.
\end{itemize}
 
\bigskip
\item{
$\cala\ne \calu$ in the following types:}
\begin{itemize}
\item[-]  surfaces without boundary and with exactly one puncture. This has been shown for the torus in \cite{BFZ} and for higher genus in \cite{Lad}.
\item[-]  the exceptional type $\mathbb{X}_7$,  \cite{Mprivate}.
\end{itemize}
\end{enumerate}
\bigskip

As an application of Theorem \ref{thm 1}, we prove in Theorem \ref{thm 2} that 

\centerline{$\cala=\calu$ for surfaces without punctures and exactly one marked point.}

This leaves the question open only for  surfaces without boundary and at least two punctures.

Summarizing, we have the following result.

\begin{thm}
 For all unpunctured surfaces, the cluster algebra is equal to the upper cluster algebra.
\end{thm}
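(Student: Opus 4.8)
The plan is to obtain this statement as an immediate consequence of Theorem \ref{thm 2} together with results already in the literature, via a case analysis on the number of marked points. Since the surface is unpunctured, by definition all of its marked points lie on the boundary, and there is at least one of them. Hence exactly one of two situations occurs: the surface has exactly one marked point, or it has at least two.

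In the first case, the surface is an unpunctured surface with a single boundary marked point, and the equality $\cala=\calu$ is precisely the content of Theorem \ref{thm 2}, whose proof in turn relies on Theorem \ref{thm 1}, i.e.\ on the fact that the basis construction of \cite{MSW2} extends to this setting. That extension is where the substantive work lies: one must show that the Laurent polynomials attached to the essential loops in the surface belong to $\cala$, which is done by producing identities expressing them as polynomials in cluster variables, using the snake graph calculus of \cite{CS,CS2,CS3} and the skein relations of \cite{MW}. In the second case, the surface has at least two marked points, all on the boundary, and $\cala=\calu$ is already known: either directly from \cite{MSW2}, which treats exactly unpunctured surfaces with at least two marked points, or alternatively from \cite{M2} applied to a surface with at least two marked points on the boundary.

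There is essentially no obstacle in assembling the final statement once Theorems \ref{thm 1} and \ref{thm 2} are in hand; the only thing to verify is that the case split above is exhaustive and that no degenerate configuration is overlooked. The few small cases --- the disk with few marked points, the annulus with one marked point on a boundary component, and similar --- are either instances of the references cited above or carry trivial cluster structure, so they cause no difficulty. The real content therefore sits entirely in Theorems \ref{thm 1} and \ref{thm 2}, and the present theorem follows by combining them with \cite{MSW2} and \cite{M2}.
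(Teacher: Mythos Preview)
Your proposal is correct and matches the paper's own treatment: the theorem is stated there as a summary, obtained by combining the new Theorem~\ref{thm 2} (one marked point) with the previously known case of at least two marked points from \cite{MSW2} and \cite{M2}. Your case split and citations are exactly those the paper uses; no further argument is given or needed.
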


\subsection{Maximal green sequences}
{Let $Q$ be a quiver and let $\bar Q$ be the quiver obtained from $Q$ by adding a vertex $i'$ and an arrow $i\to i'$, for each vertex $i$ of $Q$. The original vertices of $Q$ are called \emph{unfrozen} (or \emph{mutable}) and the new vertices are called \emph{frozen}. A \emph{green sequence} is a sequence of mutations starting at $\bar Q$ such that at each step the mutation is performed at an unfrozen vertex $i$, for which there is no arrow $j'\to i$, from a frozen vertex $j'$. A green sequence is \emph{maximal} if it produces a quiver which does not contain such a vertex $i$.

Maximal green sequences have been introduced in \cite{K}. They are related to quantum dilogarithm identities, DT invariants and BPS states, see \cite{BDP} and the references therein. 

The question whether or not {there exists a quiver in the mutation class of $Q$ which} admits a maximal green sequence seems related to the question whether or not $\cala(Q)=\calu(Q)$. Indeed, 
{given a surface $(S,M)$ which is either
a surface with boundary, or a sphere with at least 4 punctures or a torus with at least 2 punctures, there exists a triangulation $T$ whose quiver $Q_T$ admits a
maximal green sequence}. This has been shown in \cite{ACCERV}, see also \cite{BDP} for the sphere.
Moreover, {for all the exceptional types except for $\mathbb{X}_7$ there exists a quiver that admits a maximal green sequence \cite{ACCERV}, whereas the mutation class  $\mathbb{X}_7$ does not contain a quiver that admits a maximal green sequence \cite{Se}.}

On the other hand, it is shown in \cite[Proposition 8.1]{BDP} that if $Q$ admits a non-degenerate potential such that the Jacobian algebra is infinite dimensional, then $Q$ does not admit maximal green sequences. Combining this result with \cite[Proposition 9.13]{GLS}, we see that quivers from closed surfaces with exactly one puncture do not admit maximal green sequences. {The question seems to be open for  closed surfaces of genus at least 2 and with at least two punctures.} 

{Beyond finite mutation type, explicit maximal green sequences are announced by Yakimov for double Bruhat cells.}

In all the known cases, maximal green sequences exist if and only if the cluster algebra is equal to the upper cluster algebra. Our Theorem \ref{thm 2} confirms this observation for unpunctured surfaces with exactly one marked point.}

Acknowledgements: Part of this work has been carried out at the Centre de Recherche Math\'ematiques in Montr\'eal, and we thank the CRM for their hospitality. We also thank Greg Muller for stimulating discussions. 

\section{Main Result} Let $\mathcal{A}={\cala(S,M)}$ be a cluster algebra of an unpunctured surface ${S}$ with marked points ${M}$ whose coefficient system is such that the initial exchange matrix has maximal rank. The main result of \cite{MSW2} is the construction of two bases for $\cala$ under the assumption that the number of marked points is at least 2. These are the \emph{bangles basis} $\B_0$ and the \emph{bracelet basis} $\B$.
The assumption that the number of marked points is at least 2 was used in \cite{MSW2}  only to show that all elements of $\B_0$ and $\B$ are actually inside the cluster algebra, and not only in the upper cluster algebra. The proof of the spanning property and of  linear independence does not rely on the number of marked points.

The following theorem removes the assumption on the number of marked points.

\begin{thm} \label{thm 1} Let $\mathcal{A}$ be the cluster algebra of an unpunctured surface with exactly one marked point and with arbitrary coefficients. Then both $\B$ and $\B_0$ are bases of the cluster algebra $\mathcal{A}$.
\end{thm}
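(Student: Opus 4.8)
The plan is to deduce the theorem from a single assertion about essential loops, and then to attack that assertion with skein relations and snake graph calculus. First, recall from \cite{MSW2} that the spanning and linear-independence properties of $\B$ and $\B_0$ were established without any hypothesis on $|M|$; only the membership of all basis elements in $\mathcal{A}$ (rather than merely in the upper cluster algebra) used $|M|\ge 2$. Every element of $\B$ and of $\B_0$ is a product, over the connected components of a multicurve, of cluster variables (for the arc components) together with the Laurent polynomials $x_{\Bang_k(\gamma)}$, respectively $x_{\Brac_k(\gamma)}$, attached to essential loops $\gamma$. Since arcs are cluster variables, and since $x_{\Bang_k(\gamma)}=x_\gamma^{\,k}$ while $x_{\Brac_k(\gamma)}$ is a normalized Chebyshev polynomial in $x_\gamma$ with integer coefficients — an identity of Laurent polynomials that follows from the skein relations of \cite{MW}, independently of $|M|$ — the theorem reduces to: for every essential loop $\gamma$ in the surface, $x_\gamma\in\mathcal{A}$. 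I would state this as the key lemma and dispatch the theorem from it in one short paragraph.

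To prove $x_\gamma\in\mathcal{A}$, I would exhibit, for each essential loop $\gamma$, an identity in the ambient field of the form
\[
x_\alpha\,x_\beta \;=\; \varepsilon\,x_\gamma \;+\; \sum_{i}\varepsilon_i\,m_i\,x_{\gamma_i},
\]
where $\alpha,\beta$ are arcs through the unique marked point (possibly $\beta=\alpha$), $\varepsilon$ is a unit of $\mathcal{A}$ (forced to be a coefficient monomial, hence $\pm1$ in the trivial-coefficient case), the $\varepsilon_i$ are signs, the $m_i$ are Laurent monomials in the frozen boundary variables, and each $\gamma_i$ is either an arc, a contractible curve (so $x_{\gamma_i}$ is a scalar), or an essential loop that is strictly simpler than $\gamma$ in a suitable complexity measure, e.g. the geometric intersection number with a fixed triangulation $T$. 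Such identities are obtained by choosing $\alpha,\beta$ so that $\gamma$ occurs among the smoothings of $\alpha\cup\beta$, applying the skein relations of \cite{MW} one crossing at a time, and then using the snake/band graph calculus of \cite{CS,CS2,CS3} to identify each resolved term with the snake or band graph of a concrete curve and thereby with a concrete element of $\mathcal{A}$. Granting the identity, the left-hand side lies in $\mathcal{A}$, the arcs among the $\gamma_i$ are cluster variables, and the remaining essential loops are in $\mathcal{A}$ by induction on the complexity measure (the base case being the finitely many essential loops of minimal intersection number with $T$, for which the same identities hold with no loop terms on the right); hence $x_\gamma\in\mathcal{A}$.

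The main obstacle, and the part that genuinely uses the geometry of the one-marked-point surface, is the construction of the arcs $\alpha,\beta$ realizing $x_\gamma$ with \emph{unit} coefficient. Because $\mathcal{A}$ admits no useful division, a smoothing in which $\gamma$ appears alongside a non-contractible leftover curve — so that its coefficient is an honest cluster variable rather than a unit — is worthless; one must locate a smoothing that produces $\gamma$ and nothing but boundary/contractible debris, which with only a single marked point is delicate since the supply of arcs is small. Once $\alpha,\beta$ are chosen, the remaining work is the bookkeeping of all the smoothing terms: tracking multiplicities, signs, and boundary monomials, and pinning down the exact isotopy class of every resolved curve via the snake graph calculus, so that each term on the right-hand side is recognized as a product of cluster variables or as a strictly simpler loop. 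A secondary technical point is to verify well-foundedness of the induction — that the loops $\gamma_i$ really do have strictly smaller intersection number with $T$, and that only finitely many essential loops have intersection number below any given bound — so that the base case is genuinely finite and the recursion terminates.
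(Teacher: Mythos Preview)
Your reduction in the first paragraph matches the paper's proof of Theorem~\ref{thm 1} almost exactly (one omission: the spanning and independence arguments in \cite{MSW2} require the initial exchange matrix to have full rank, which the paper secures via \cite[Theorem~14.3]{FST} using that the unique boundary component carries an odd number of marked points). So the theorem does reduce, as you say, to showing $x_\zeta\in\cala$ for every essential loop $\zeta$.

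Where you diverge from the paper is in your plan for that reduction. You propose a generic induction on intersection number with $T$, realising each loop as a term in a skein resolution of a product of two \emph{arcs} $\alpha,\beta$. The paper does something sharper and avoids any open-ended induction: it singles out one distinguished loop, the loop $L$ around the boundary, proves $L\in\cala$ by an explicit genus-by-genus snake graph computation (Lemma~\ref{lem loop}; the products $V_1V_2$ and $U_1U_2$ of concrete cluster variables produce $L$ and an auxiliary band-graph element $X_1$, and a second identity disposes of $X_1$), and then reduces \emph{every} other essential loop $\zeta$ to $L$ in a single step (Lemma~\ref{lem loop2}). That step multiplies the lasso arc $\gamma=\alpha\zeta\alpha^{-1}$ not by another arc but by the generalized self-crossing arc $\delta$ winding twice around the boundary; one has $x_\delta=L$, and the skein resolution of $x_\delta x_\gamma$ yields $x_\zeta$ with unit coefficient plus cluster variables. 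So the crucial multiplier is $L$ itself, already known to lie in $\cala$, rather than a second arc.

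Relative to this, your proposal has a genuine gap. You correctly flag that producing $\zeta$ with unit coefficient from a product of two arcs is the delicate point, but you do not construct such $\alpha,\beta$, and with $|M|=1$ it is far from clear they exist in general: the paper's own argument for $L$ already needs a two-stage computation because the first resolution throws off non-arc band-graph terms. Nor is the proposed complexity measure shown to decrease along resolutions, and the ``base case'' of minimal-intersection loops is asserted rather than established. The paper's route sidesteps all of this by making the boundary loop the sole base case and using $L$ as the multiplier; the price is the explicit snake graph calculus in each genus, which your outline does not anticipate.
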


{For the proof of this theorem, we need the following two Lemmas. An \emph{essential loop} in the surface $(S,M)$ is a closed curve in $S$ which is disjoint from the boundary of $S$, which is not contractible and does not have any self-crossings.  Since $|M|=1$, the surface $S$ has exactly one boundary component. In \cite{MSW2}, the authors associate to every essential loop a Laurent polynomial  given as a sum over perfect matchings of the corresponding band graph.}
\begin{lem}\label{lem loop}
  The Laurent polynomial $L$ of the essential loop around the boundary  is in the cluster algebra $\cala$.
\end{lem}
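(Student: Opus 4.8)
The plan is to exhibit an explicit identity in $\cala$ that writes $L$ as a $\mathbb{Z}\mathbb{P}$--linear combination of products of cluster variables; since $\cala$ is, by definition, the $\mathbb{Z}\mathbb{P}$--subalgebra of the ambient field generated by the cluster variables, such an identity proves $L\in\cala$. The two tools are the skein relations of \cite{MW} and the snake graph calculus of \cite{CS,CS2,CS3}. Write $\ell$ for the essential loop around the boundary. Since $|M|=1$, the surface $S$ has a single boundary component, which carries the single marked point $p$, and some genus $g\ge 1$; the loop $\ell$ cobounds a collar annulus with $\partial S$, hence it is separating, so any arc of any triangulation meets $\ell$ in an even number of points, and a non--boundary--parallel arc meets it in at least two points. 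First I would fix a convenient triangulation $T$ of $S$ (for instance one adapted to the genus), read off the band graph of $\ell$ with respect to $T$, and pick an auxiliary arc $\gamma$ -- of $T$, or of a triangulation obtained from $T$ by a few mutations -- crossing $\ell$ in exactly two points, positioned as simply as possible relative to the collar and to the triangle at $p$ containing the boundary segment.

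The next step is to smooth, via the skein relations of \cite{MW}, the two crossings of $\gamma$ with $\ell$, and then to smooth the self--crossings of the curves this produces. Each first smoothing gives an arc from $p$ to $p$ obtained by splicing one of the two orientations of $\ell$ into $\gamma$ and carrying a single self--crossing; resolving that self--crossing, and keeping track of the collar, each resulting curve is: a simple arc from $p$ to $p$ -- hence a cluster variable of $\cala$, as any simple arc that is neither contractible nor boundary--parallel is part of some triangulation; a contractible loop, whose Laurent polynomial is a known constant; the disjoint union of a simple arc with a contractible loop; or the disjoint union of a simple arc with $\ell$ itself. Running the snake graph calculus of \cite{CS,CS2,CS3} alongside this, I would promote these geometric smoothings to genuine identities among the associated Laurent polynomials and evaluate the Laurent polynomials of the degenerate pieces. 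The outcome is a single identity, all of whose terms are products of cluster variables and elements of $\mathbb{Z}\mathbb{P}$ except for controlled reappearances of $L$ multiplied by cluster variables or coefficients.

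It then remains to isolate $L$. Collecting terms, the identity is affine--linear in $L$, and solving for $L$ produces a rational expression in the cluster variables and coefficients; the crux is to check that this expression genuinely lies in $\cala$, i.e.\ that the denominator created by the solve divides the numerator with quotient again expressible through cluster variables. For this I expect to use both the maximal--rank hypothesis on the initial exchange matrix -- which supplies enough independent coefficients to make (or to clear) the $\mathbb{Z}\mathbb{P}$--part of the denominator -- and the fact, immediate from the band graph description, that $L$ is a Laurent polynomial in the cluster of every seed, which is what makes the required divisibilities verifiable.

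The main obstacle, I expect, is precisely this bookkeeping. The auxiliary arc $\gamma$ -- together, most likely, with a second auxiliary arc supplying an independent identity -- must be chosen so that the chain of skein resolutions closes up \emph{without} ever producing an essential loop other than $\ell$; this is where the hypothesis $|M|=1$ is genuinely used, since it forces the topology to have a single boundary component and hence every resolved curve to be an arc, a contractible loop, or a copy of $\ell$. Showing that, after all cancellations, the only surviving denominators are harmless -- a cluster variable dividing one of its multiples, and a unit of $\mathbb{Z}\mathbb{P}$ -- so that the final formula for $L$ is visibly a polynomial in cluster variables rather than merely an element of $\calu$, is the delicate point, and it is there that the interplay of the skein relations with the snake graph calculus does the real work.
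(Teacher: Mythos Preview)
Your proposal has a genuine gap at exactly the point you flag as ``the crux.'' The identity you produce by smoothing the two crossings of $\gamma$ with $\ell$ has the shape
\[
x_\gamma\,L \;=\; \sum_i Y_i\,(\textup{products of cluster variables and possibly }L),
\]
so after collecting the $L$--terms you are left with $P\cdot L \in \cala$ for some polynomial $P$ in cluster variables. There is no mechanism to divide by $P$ inside $\cala$: the Laurent phenomenon you invoke only tells you $L\in\calu$, which is precisely what the lemma is trying to improve on, and the maximal--rank hypothesis on the exchange matrix controls the $\mathbb{Z}\mathbb{P}$--coefficients, not divisibility by cluster variables. A second auxiliary arc gives a second identity of the same shape, and solving the resulting $2\times 2$ system again introduces a determinant that is a polynomial in cluster variables. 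Also, your claim that ``every resolved curve is an arc, a contractible loop, or a copy of $\ell$'' is not forced by $|M|=1$: the surface has genus $g\ge 1$, so skein resolutions can and do produce essential loops not isotopic to $\ell$.

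The paper avoids the division problem entirely, and this is the idea you are missing. Rather than multiplying $L$ by a cluster variable, one looks for two cluster variables $V_1,V_2$ whose product, computed via snake graph calculus, equals $L$ plus a correction term whose $\mathbb{Z}\mathbb{P}$--coefficient is a unit. Concretely, for a suitable triangulation in each genus, one obtains
\[
V_1V_2 \;=\; L \;+\; y\,(y'X_1+x_a)(X_2+Y\,x_{a'}),
\]
with $y,y',Y\in\mathbb{P}$ and $X_1,X_2$ the Laurent polynomials of two further band graphs. So $L\in\cala$ reduces to $X_1,X_2\in\cala$, and a second product $U_1U_2$ of cluster variables is then exhibited in which $X_1$ (and by symmetry $X_2$) appears with a unit $\mathbb{Z}\mathbb{P}$--coefficient and all other terms are cluster variables. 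The entire argument is an explicit computation, carried out first for genus $1$, then genus $2$, then adapted to higher genus; the skein relations you emphasise are used in the paper only for Lemma~\ref{lem loop2}, where Lemma~\ref{lem loop} is already available to handle the one closed curve that the resolution produces.
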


\begin{lem}\label{lem loop2}
 The Laurent polynomial $x_\zeta$ of every essential loop $\zeta$ is in the cluster algebra $\cala$.
\end{lem}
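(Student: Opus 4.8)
The plan is to reduce the general essential loop $\zeta$ to the boundary loop handled in Lemma \ref{lem loop}, using the structure of the surface $(S,M)$ with $|M|=1$. Since $S$ has a single boundary component, the complement of a small neighborhood of the boundary is an (unpunctured) surface $S'$ with one boundary component and higher genus deleted; any essential loop $\zeta$ is, up to isotopy, either isotopic to the boundary loop of Lemma \ref{lem loop}, or it is an essential loop that is non-peripheral, i.e. it does not bound. For the first case there is nothing to prove. For the second case, the idea is to express $x_\zeta$ in terms of cluster variables and the already-known Laurent polynomial $L$ by means of the skein relations of \cite{MW} together with the snake/band graph calculus of \cite{CS,CS2,CS3}.

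Concretely, I would pick a triangulation $T$ of $(S,M)$ and realize $\zeta$ as a closed curve in general position with respect to $T$. The first step is to choose an arc $\gamma$ of $T$ (or a suitable auxiliary arc) that crosses $\zeta$; smoothing the two crossing points of $\zeta$ with a doubled copy of such an arc via the skein relation of \cite{MW} resolves $\zeta \cdot \zeta'$ (or $\zeta \cdot \gamma$) into a $\mathbb{Z}[x_\gamma^{\pm}]$-linear combination of products of curves each of which either has strictly fewer crossings with $T$, or is contractible (hence equals $2$ or a product of boundary arcs), or is the boundary loop. The band graph for $\zeta$ has an associated snake graph, and the resolution identities of \cite{CS,CS2,CS3} translate these topological smoothings into precise identities among the corresponding Laurent polynomials: $x_\zeta$ gets written as a polynomial in cluster variables $x_\gamma$, the shorter loop polynomials, and $L$. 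Running an induction on the number of crossings $|\zeta \cap T|$, the base case being a loop with minimal intersection number (which either is peripheral, so covered by Lemma \ref{lem loop}, or can be made to cross $T$ exactly twice and resolved directly), one concludes that every $x_\zeta$ lies in $\cala$.

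The main obstacle will be the base case and the bookkeeping of the resolution: one must guarantee that the skein resolution actually \emph{reduces} the complexity — that no term on the right-hand side is again an essential loop of the same or greater crossing number with a genuinely new isotopy type — and that the "leftover" closed components produced by smoothing are controlled, namely are either nullhomotopic, peripheral, or strictly simpler. This is exactly where the hypothesis $|M|=1$ is delicate: with only one marked point the triangulation is forced to contain arcs that wrap around the genus, so the resolutions can produce loops rather than arcs, and one has to argue that iterating the procedure terminates. I expect to handle this by a careful choice of the arc to double at each stage (choosing one that meets $\zeta$ essentially and minimally) and by setting up the induction on the lexicographic pair (number of essential-loop components, total crossing number with $T$), using Lemma \ref{lem loop} to dispose of the peripheral loop that appears as the "innermost" term. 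Once $x_\zeta$ is expressed as a $\mathbb{Z}$-polynomial in cluster variables and $L$, membership in $\cala$ is immediate.
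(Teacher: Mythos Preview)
Your proposal has a genuine gap at the central step. Skein relations give identities of the form $x_\zeta\, x_\gamma = (\text{sum of Laurent polynomials of simpler curves})$, so even if every term on the right-hand side lies in $\cala$, isolating $x_\zeta$ forces you to divide by the cluster variable $x_\gamma$, and cluster variables are not invertible in $\cala$. Your write-up implicitly acknowledges this by passing to ``$\mathbb{Z}[x_\gamma^{\pm}]$-linear combinations'', but then the conclusion ``$x_\zeta$ is a $\mathbb{Z}$-polynomial in cluster variables and $L$'' does not follow. The termination of your induction is a second, independent problem that you note but do not resolve; for separating loops in particular, every arc based at the unique marked point meets $\zeta$ an even number of times, so there is no single-crossing resolution available and the ``simpler loop'' produced by smoothing need not be simpler in any useful sense.

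The paper sidesteps both issues with a single non-inductive identity. It replaces $\zeta$ by the \emph{arc} $\gamma=\alpha\zeta\alpha^{-1}$ (where $\alpha$ runs from the marked point to a point on $\zeta$), so that $x_\gamma$ is an honest cluster variable; it then multiplies $x_\gamma$ by the Laurent polynomial $x_\delta$ of the generalized arc $\delta$ that winds twice around the boundary. One skein resolution of $x_\delta x_\gamma$ yields
\[
x_\delta\, x_\gamma \;=\; Y_1\, x_\varepsilon \;+\; Y_2\, x_\zeta \;+\; Y_3\, x_\sigma,
\]
with $x_\varepsilon, x_\sigma$ cluster variables and $Y_i$ monomials in the (invertible) coefficients $y_j$. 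A further skein computation shows $x_\delta = L$, which lies in $\cala$ by Lemma~\ref{lem loop}. Thus the left-hand side is in $\cala$, the other right-hand terms are in $\cala$, and since $Y_2$ is a unit one solves for $x_\zeta\in\cala$ directly. The point is that the factor one divides by is a coefficient monomial, not a cluster variable; your scheme never arranges this.
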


Of course the second lemma implies the first, but the first is needed in the proof of the second. We prefer stating them separately, because their proofs use different techniques. 
 The proof  of Lemma \ref{lem loop} is given in {Section} \ref{sect proof} and the proof of Lemma \ref{lem loop2} in {Section} \ref{sect proof2}.

\begin{proof}[Proof of Theorem \ref{thm 1}]

 Let $T$ be a triangulation of the surface, and let $B_T$ be the associated exchange matrix. Since there are no punctures and the unique boundary component has an odd number of marked points (namely one), it follows from \cite[Theorem 14.3]{FST} that the rank of the matrix $B_T$ is maximal for every coefficient system. Therefore the results of \cite{MSW2} show that both $\B_0$ and $\B$  are linearly independent and that every element of $\cala$ is a linear combination of elements of $\B_0$ and also a linear combination of elements of $\B$. 
 
 It remains to show that $\B_0$ and $\B$ are subsets of $\cala$. 
 
 Lemma \ref{lem loop2} implies that every element of $\B_0$ is in $\cala$. 
 Moreover the Laurent polynomials associated to the bracelets in $\B$ can be written as Chebyshev polynomials in the Laurent polynomials associated to the essential loops in $\B_0$, see \cite[Proposition 4.2]{MSW2}. Thus $\B$ is also a subset of $\cala$. 
\end{proof}

The following result follows directly from Theorem \ref{thm 1}.
\begin{thm}\label{thm 2}
 Let $\mathcal{A}$ be the cluster algebra of an unpunctured surface with exactly one marked point and with trivial coefficients, and let $\calu$ be its upper cluster algebra. Then
 \[\cala=\calu.\]
\end{thm}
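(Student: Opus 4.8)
The plan is to deduce $\cala=\calu$ from Theorem \ref{thm 1} by comparing three nested objects: the cluster algebra $\cala$, the upper cluster algebra $\calu$, and the span of the bangles (or bracelets) basis. First I would recall the general inclusion $\cala\subseteq\calu$, valid for any seed by the Laurent phenomenon \cite{FZ1,BFZ}; the real content is the reverse inclusion $\calu\subseteq\cala$. The key point is that Theorem \ref{thm 1} tells us $\B_0$ is a \emph{basis} of $\cala$, while the spanning argument of \cite{MSW2} — which, as emphasized in the text, never used the hypothesis $|M|\ge 2$ — actually shows that $\B_0$ spans the \emph{upper} cluster algebra $\calu$ (or at least a $\mathbb{Z}$-module containing it). So the strategy is: $\calu \subseteq \operatorname{span}_{\mathbb Z}\B_0 = \cala$, where the first inclusion is the MSW2 spanning statement applied to $\calu$ and the equality is Theorem \ref{thm 1}.

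More concretely, I would proceed as follows. Step one: invoke \cite[Theorem 14.3]{FST} exactly as in the proof of Theorem \ref{thm 1} to note that for trivial coefficients the exchange matrix $B_T$ of any triangulation still has maximal rank (an unpunctured surface with one marked point on one boundary component has an odd number of marked points on that component), so the hypotheses needed to apply the results of \cite{MSW2} are in force. Step two: cite the part of \cite{MSW2} that proves $\B_0$ (equivalently $\B$) spans $\calu$ over $\mathbb{Z}$ — this is the "spanning property" which, per the paragraph preceding Theorem \ref{thm 1}, is independent of $|M|$; hence every element of $\calu$ is a $\mathbb{Z}$-linear combination of elements of $\B_0$. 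Step three: apply Theorem \ref{thm 1}, which gives $\B_0\subseteq\cala$, so any $\mathbb{Z}$-linear combination of elements of $\B_0$ lies in $\cala$; therefore $\calu\subseteq\cala$. Combined with the trivial inclusion $\cala\subseteq\calu$ this yields $\cala=\calu$.

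The step I expect to need the most care is making precise that the spanning result of \cite{MSW2} genuinely produces membership in $\calu$ (not merely in some abstract span), and that it applies verbatim with trivial coefficients and $|M|=1$. In \cite{MSW2} the spanning and linear-independence arguments are stated for $|M|\ge 2$, but the excerpt explicitly asserts that the $|M|\ge 2$ hypothesis was used there \emph{only} to place $\B_0,\B$ inside $\cala$; so one must check that the surface-with-one-marked-point still admits the snake/band graph combinatorics and the compatibility/expansion formulas that drive the spanning proof. This is essentially a matter of citing \cite{MSW,MSW2} carefully rather than a new computation. Everything else — the two inclusions and the appeal to Theorem \ref{thm 1} — is formal, so the proof will be short once the spanning statement is correctly attributed.

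\begin{proof}[Proof of Theorem \ref{thm 2}]
By the Laurent phenomenon \cite{FZ1,BFZ}, we always have $\cala\subseteq\calu$, so it suffices to prove the reverse inclusion. As in the proof of Theorem \ref{thm 1}, since the surface is unpunctured and its unique boundary component carries an odd number of marked points, \cite[Theorem 14.3]{FST} shows that the exchange matrix $B_T$ of any triangulation $T$ has maximal rank; in particular the results of \cite{MSW2} apply. As explained before Theorem \ref{thm 1}, the proof in \cite{MSW2} that $\B_0$ spans does not use the hypothesis $|M|\ge 2$, and in fact shows that every element of $\calu$ is a $\mathbb{Z}$-linear combination of elements of $\B_0$. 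By Theorem \ref{thm 1}, every element of $\B_0$ lies in $\cala$, so every such linear combination lies in $\cala$. Hence $\calu\subseteq\cala$, and therefore $\cala=\calu$.
\end{proof}
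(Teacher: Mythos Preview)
Your overall strategy is exactly the paper's: show that $\B_0$ spans $\calu$, then use Theorem~\ref{thm 1} to conclude $\B_0\subset\cala$, hence $\calu\subset\cala$. However, the one step you yourself flag as delicate is not correctly justified. You claim that the spanning argument in \cite{MSW2} ``in fact shows that every element of $\calu$ is a $\mathbb{Z}$-linear combination of elements of $\B_0$.'' That is not what \cite{MSW2} proves: as the paper states just before Theorem~\ref{thm 1}, the spanning result of \cite{MSW2} is that every element of $\cala$ (not $\calu$) is a linear combination of elements of $\B_0$. Knowing that $\B_0$ spans $\cala$ and that $\cala\subset\calu$ does not by itself give that $\B_0$ spans $\calu$.

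The paper closes this gap by invoking a different source: it cites \cite[Theorem 12.3(ii)]{FG1} (Fock--Goncharov) for the statement that $\B_0$ is a basis of $\calu$ in the trivial-coefficient case. Once that is in place, your Step three and the trivial inclusion finish the proof exactly as you wrote. So your argument becomes correct, and essentially identical to the paper's, if you replace the appeal to \cite{MSW2} for spanning $\calu$ by an appeal to \cite{FG1}. The invocation of \cite[Theorem 14.3]{FST} and the maximal-rank discussion are not needed here; they were used in Theorem~\ref{thm 1} but play no role in deducing Theorem~\ref{thm 2} from it.
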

 
\begin{proof}
 It follows from  \cite[Theorem 12.3(ii)]{FG1}
  that $\B_0$ is a basis of $\calu$. Theorem~\ref{thm 1} implies that $\B_0\subset \cala$, and thus $\calu\subset\cala$. The other inclusion always holds.
\end{proof}

\section{Proof of Lemma \ref{lem loop}}\label{sect proof}
We {{prove}} the lemma first for genus 1, then genus 2 and then for higher genus.
Throughout the section, we denote by $\cala_\bullet$ the cluster algebra with principal coefficients in the initial seed corresponding to our triangulation $T$ and by $\hat\cala$ a cluster algebra with arbitrary coefficient system and initial seed corresponding to $T$. More precisely, let ${\mathbb{P}}=\textup{Trop}(y_1,\ldots,y_n)$ be the tropical semifield and ${\mathbf{y}}=(y_1,\ldots, y_n)$ be the initial coefficient tuple consisting of the generators of $\mathbb{P}$.
Then  $\cala_\bullet$ is the cluster algebra with initial seed $(\mathbf{x}_T,{\mathbf{y}},B_T)$.
On the other hand, let $\hat{\mathbb{P}}$ be any semifield and $\hat{\mathbf{y}}=(\hat y_1,\ldots,\hat y_n)$ be any coefficient tuple, thus $\hat y_i\in \hat{\mathbb{P}}$. Then $\hat\cala$ is the cluster algebra with initial seed $(\mathbf{x}_T,\hat{\mathbf{y}},B_T)$. 

{Recall that every arc or {loop} in  $(S,M)$ corresponds to a snake or band graph and to a Laurent polynomial given by perfect matchings of the snake or band graph. A band graph is obtained from a snake graph by identifying an edge in the first tile with an edge in the last tile. In the figures, this identification is represented by marking the vertices of the identified edges with bullets. For precise definitions, we refer to \cite{MSW2}. For the snake graph calculus used in this section, see \cite{CS2}.}

\subsection{Genus 1}
Let $(S,M)$ be  a surface of genus 1 with one boundary component and one marked point. {This case is known in the literature as `the dreaded torus', because, among cluster algebras of surface type, it was the smallest example where the question if the cluster algebra is equal to the upper cluster algebra was open, see \cite[Remark 7.2.3]{MM}.}  Fix the triangulation  $T$ shown on the left in Figure \ref{triangulationgenus2}, and let $\mathbf{x}_T=(x_1,\ldots,x_{4})$ be the corresponding cluster in $\cala_\bullet$. 

\begin{figure}[ht]
\begin{center}
\scalebox{0.4} { \Huge 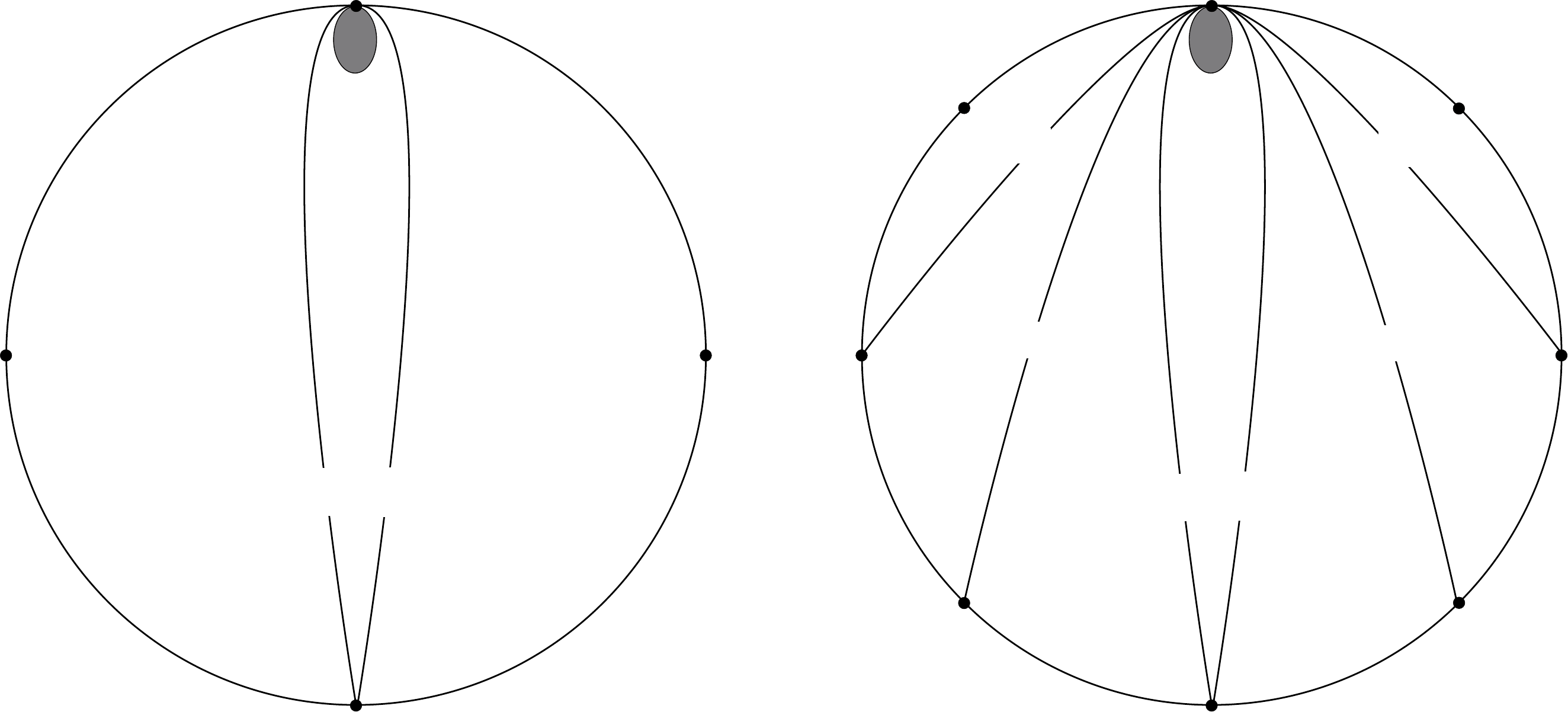}
 \caption{A triangulation of $(S,M)$ in genus 1 (left) and a triangulation in genus 2 (right)}
 \label{triangulationgenus2}
 \end{center}
\end{figure}

%

\begin{figure}[ht]
\begin{center}
\scalebox{0.57} { \large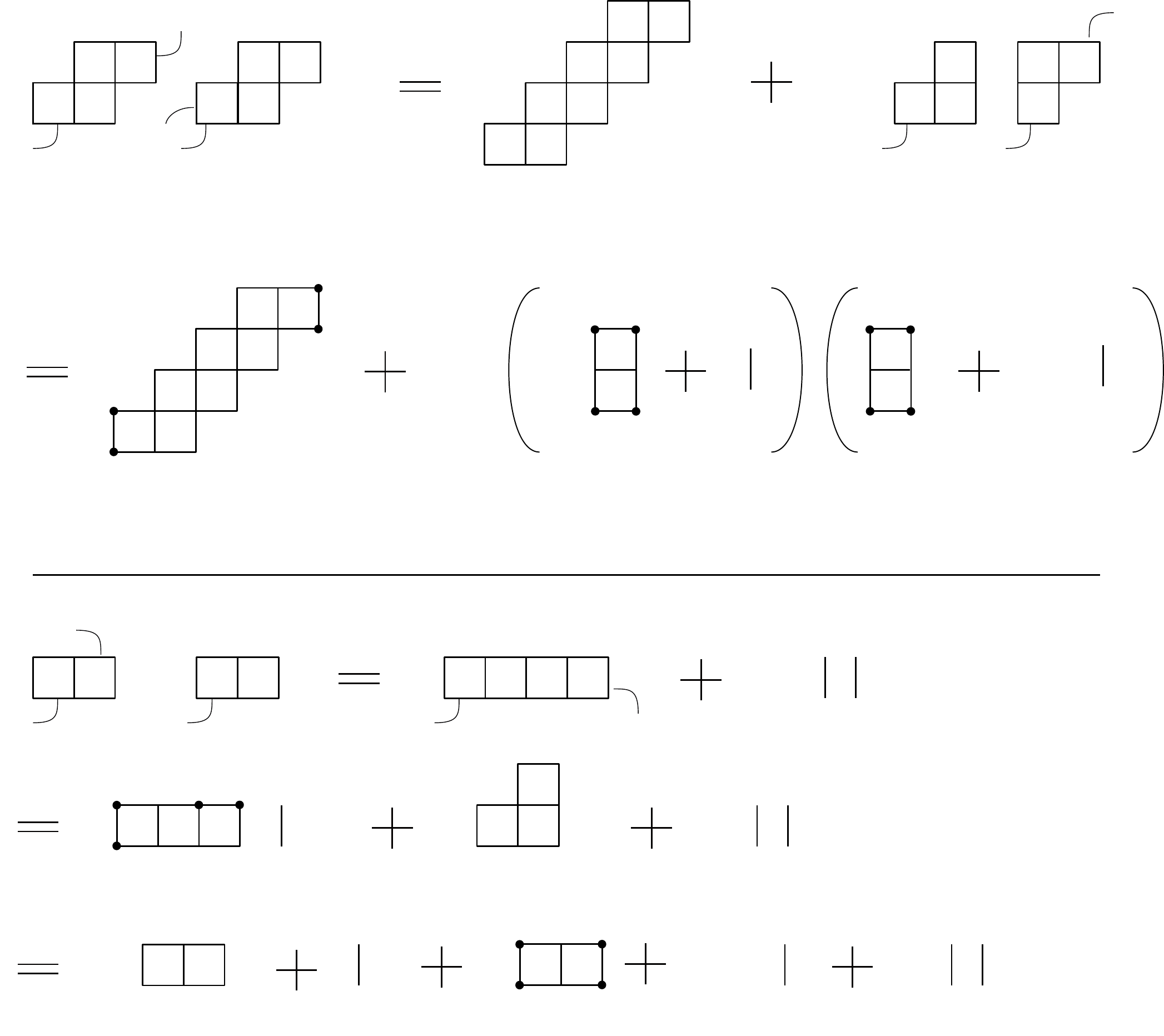}
 \caption {Snakegraph calculus with principal coefficients. The edge labels `min', `max' refer to minimal and maximal matchings, which is needed only to determine the $y$-coefficients. {The edge label $B$ refers to the boundary.}}
 \label{snakegraphcalculusgenus1acoeff}
 \end{center}
\end{figure}


Let $L$ be the Laurent polynomial associated to the loop around the boundary. Its band graph is the first graph in the second row  in Figure \ref{snakegraphcalculusgenus1acoeff}.
 
\begin{lem} 
$ L\in \cala_\bullet$.
\end{lem}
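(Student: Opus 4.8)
The plan is to express the band-graph Laurent polynomial $L$ of the boundary loop in terms of cluster variables by means of snake graph calculus together with a skein-type identity, all carried out explicitly in the genus $1$ triangulation $T$ fixed above. Concretely, I first read off from Figure \ref{snakegraphcalculusgenus1acoeff} the snake graphs of the auxiliary arcs: the loop $L$ itself, the short arcs $V_1,V_2,U_1,U_2,W_1$ and the arc $X_1$ appearing in that figure, and I record their Laurent expansions $x_L,x_{V_i},x_{U_i},x_{W_1},x_{X_1}$ (with principal $y$-coefficients, which the edge labels $\mathrm{min},\mathrm{max}$ in the figure are designed to pin down). Since $X_1,V_i,U_i,W_1$ are genuine arcs in $(S,M)$ — not loops — their Laurent polynomials are honest cluster variables (or products of cluster variables) of $\cala_\bullet$ by the cluster expansion theorem of \cite{MSW}, hence lie in $\cala_\bullet$. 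So it suffices to produce an algebraic identity expressing $x_L$ as a $\mathbb{Z}[y]$-polynomial in these.

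The mechanism for such an identity is the resolution of crossings. I would take a curve that crosses the boundary loop (or take the self-crossing/smoothing configuration indicated in the figure, where the loop is drawn together with an arc $X_1$), and apply the skein relations of \cite{MW}: resolving the crossing expresses the product of the two crossing Laurent polynomials as a sum of two terms, each a product of Laurent polynomials of the smoothed curves, with monomial $y$-coefficients that one computes from the minimal/maximal matchings. In snake-graph language this is exactly the grafting/resolution calculus of \cite{CS2}: the band graph of $L$ is obtained from the snake graphs of the listed arcs by the crossing-resolution operations, which translates into the sought identity
\[
(\text{product involving } x_{X_1}) \;=\; x_L\cdot(\text{cluster monomial}) \;+\; (\text{cluster monomial}),
\]
i.e. after dividing, $x_L$ is a Laurent polynomial in the $x_i$ and $y_i$ that in fact lies in $\cala_\bullet$. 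The diagram in Figure \ref{snakegraphcalculusgenus1acoeff}, with its three rows ($L$ and $X_1$ on top, the $U_i,V_i$ in the middle, $W_1$ and $X_1$ at the bottom) and the $y$-monomials $y_1,y_3,y_4,y_1y_3,y_1y_2y_3,y_1y_2y_3y_4$ attached to its edges, is precisely the bookkeeping of this computation; I would verify each grafting step using the matching-counting rules of \cite{CS2} and confirm that the coefficients match.

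The main obstacle I expect is not the existence of such an identity but making the $y$-coefficients come out exactly right: the skein/grafting relations hold on the level of unspecified coefficients, and one must check — tile by tile, using the minimal and maximal perfect matchings labelled $\mathrm{min}/\mathrm{max}$ in the figure — that the principal-coefficient version of the identity is literally correct, so that after solving for $x_L$ one indeed gets an element of $\cala_\bullet$ rather than of the fraction field. A secondary point to be careful about is orientation/identification of the glued edge of the band graph (the edges marked $B$, the boundary): one must make sure the band-graph perfect matchings of $L$ are the ones produced by the resolution, with the correct self-gluing, so that $x_L$ is genuinely the Laurent polynomial of the essential loop around the boundary and not of some other closed curve. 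Once the genus $1$ identity is established, the same strategy — resolve the boundary loop against a well-chosen arc, read the resulting snake-graph identity, solve for $x_L$ — will be repeated for genus $2$ and then the general genus in the subsequent subsections.
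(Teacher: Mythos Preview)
There is a genuine gap in your plan: you assert that ``$X_1,V_i,U_i,W_1$ are genuine arcs in $(S,M)$ --- not loops'', but this is false for $X_1$. In the paper's figure, $X_1$ is drawn as a \emph{band graph} (note the bullet markings on its glued edges), and it corresponds to an essential loop in the surface, not to an arc. So $x_{X_1}$ is \emph{not} a cluster variable, and its membership in $\cala_\bullet$ is exactly as problematic as that of $L$ itself. Your proposed single identity ``(product involving $x_{X_1}$) $=$ $x_L\cdot(\text{cluster monomial}) + (\text{cluster monomial})$'' therefore does not do the job, even setting aside that you cannot divide by a cluster monomial inside $\cala_\bullet$.

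The actual argument has a two-step structure that your outline obscures. First, the product of the cluster variables $V_1V_2$ is resolved (by grafting and self-grafting in the sense of \cite{CS,CS2}) to give
\[
V_1V_2 \;=\; L \;+\; y_3\,(y_4 X_1 + x_3)(X_1 + y_1y_2y_3\,x_4),
\]
so that $L\in\cala_\bullet$ \emph{provided} $X_1\in\cala_\bullet$. Second --- and this is the step you are missing --- a separate product $U_1U_2$ of cluster variables is resolved to yield
\[
U_1U_2 \;=\; y_1 W_1 + x_3 + y_4 X_1 + y_1y_2y_3y_4\,x_4 + y_1y_3\,x_1x_2,
\]
where now everything except $X_1$ is visibly in $\cala_\bullet$. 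This gives $y_4 X_1\in\cala_\bullet$, and one concludes $X_1\in\cala_\bullet$ only because $y_4$ lies in the coefficient \emph{group} $\mathbb{P}=\mathrm{Trop}(y_1,\dots,y_n)$ and is therefore invertible there --- not because of any Laurent-polynomial reasoning. Your plan needs to incorporate this second identity and the invertibility of the $y_i$ explicitly; without them the argument does not close.
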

\begin{proof}
 Let $V_1$ be the cluster variable given by the arc that starts at the marked point in direction between the arc 3 and the boundary and then crosses the arcs 4,2,1,4 in order and ends at the marked point as it started from the direction between the arc 3 and the boundary.  
Let $V_2$ be the cluster variable given by the symmetric arc, that is, the arc that starts at the marked point in direction between the arc 4 and the boundary and then crosses the arcs 3,1,2,3 in order and ends at the marked point as it started from the direction between the arc 4 and the boundary.  The snake graphs of these two cluster variables are the first two snake graphs in the top row of 
 Figure \ref{snakegraphcalculusgenus1acoeff}.
 The first two equations in that figure  use snake graph calculus of \cite{CS} and \cite{CS2} to compute the product  $V_1 V_2$ showing that
  \begin{equation}\label{eq1}
 V_1\, V_2 = L  + y_3(y_4\,X_1  +x_{3})(X_1+y_1y_2y_3\,x_{4}),
\end{equation}
where  $X_1$ is the Laurent polynomial defined by the corresponding band  graph in the figure.
The first equation is the resolution of  the grafting operation \cite[section 2.5 case 2]{CS} and the second equation is the resolution of three self-grafting operations \cite[section 3.4]{CS2}. The graphs of $L$ and $X_1$ are band graphs.
 
Thus in order to show that $L\in \cala_\bullet$ it suffices to show that $X_1\in \cala_\bullet$. This is done in the second computation in Figure \ref{snakegraphcalculusgenus1acoeff}, where we compute the product $U_1U_2$ of the two cluster variables  given by the arcs that cross 1,3 and 4,2 respectively, showing that
\begin{equation} \label{eq2}  U_1U_2 = y_1\, W_1+x_3+  y_4\,X_1+y_1y_2y_3y_4\,x_4+y_1y_3\,x_1x_2,
\end{equation}
where  $W_1$ is the Laurent polynomial defined by the corresponding snake  graph in the figure.
{{One can show that $U_1, U_2$ are cluster variables by checking that  the associated arcs in the surface do not have a self-crossing, or by using \cite{CS2} to show that their associated snake graphs do not have any self-crossing overlap.}} The first equation in this computation is the resolution of a grafting, the second equation is a self-grafting {{\cite[section 3.3]{CS2}}} and the third is a grafting {{of a band graph with a single edge \cite{CS3}}}  producing $y_1W_1+x_3$, as well as a self-grafting to obtain $y_4X_1 +y_1y_2y_3y_4x_4$. Note that $W_1$ is a cluster variable corresponding to the arc crossing 3 and 4. This shows that 
{$y_4 X_1\in \cala_\bullet$. Since $y_4$ is an element of the group $  \mathbb{P}$, its inverse $y_4^{-1}$ also is in $\mathbb{P}$, and therefore   $X_1\in \cala_\bullet$, and thus  $L\in\cala_\bullet$. }
 \end{proof}

%

 
\begin{lem}
 $L\in\hat\cala$.
\end{lem}
\begin{proof}
 
%
Using Fomin-Zelevinsky's separation of addition formula \cite[Theorem 3.7]{FZ4} we see that the elements of ${\hat\cala}$ can be computed from the elements in ${\cala}$ by replacing the principal coefficients $y_i$ by the corresponding $\hat y_i$ and by dividing by the $F$-polynomial evaluated over $\hat{\mathbb{P}}$ in $\hat y_i$.
For example, 
\[\hat U_1 =\frac{ U_1(x_1,\ldots,x_n;\hat y_1,\ldots,\hat y_n) }{ \hat F_{U_1}},\]
where $\hat F_{U_1}=F_{U_1}|_{\hat{\mathbb{P}}}(\hat y_1,\ldots,\hat y_n)\in\hat{\mathbb{P}}\subset\hat\cala.$

Evaluating the expressions in equations (\ref{eq1}) and (\ref{eq2}) in $\hat{\mathbf{y}}$ clearly preserves the identities. Then multiplying with the $F$-polynomials we get

{\begin{equation}\label{eq1hat}
 \hat V_1\,\hat V_2 \hat F_{V_1}\hat F_{V_2}= \hat L\hat F_{L}  + \hat y_3(\hat y_4\,\hat X_1\hat F_{X_1}  +x_{3})(\hat X_1\hat F_{X_1} +\hat y_1\hat y_2\hat y_3\,x_{4}),
\end{equation}
\begin{equation} \label{eq2hat}
 \hat  U_1\hat U_2{{\hat F_{U_1}}}\hat F_{U_2} = \hat y_1\, \hat W_1\hat F_{W_1}+x_3+\hat y_4\,\hat X_1\hat F_{X_1} +\hat y_1\hat y_2\hat y_3\hat y_4\,x_4+\hat y_1\hat y_3\,x_1x_2,
\end{equation}
}

From equation (\ref{eq2hat}) we see that {$\hat y_4\,\hat X_1\hat F_{X_1} \in\hat\cala$, and since $\hat y_4 \in\hat{\mathbb{P}}$, and thus $ \hat y_4^{-1} \in \hat{\mathbb{P}}$, this also implies that $\hat X_1\hat F_{X_1} \in\hat\cala$.} Now equation (\ref{eq1hat}) implies that $\hat L\hat F_{L}\in \hat\cala$, and, since $\hat F_{L}\in \hat{\mathbb{P}}$, this shows that $\hat L\in \hat \cala.$
\end{proof}\subsection{Genus 2}
Let $(S,M)$ be  a surface of genus 2 with one boundary component and one marked point. Fix the triangulation  {{$T$}} shown on the right in Figure \ref{triangulationgenus2}, and let $\mathbf{x}_T=(x_1,\ldots,x_{10})$ be the corresponding cluster in $\mathcal{A}$. 

Let $L$ be the Laurent polynomial associated to the loop around the boundary. Its band graph is the first graph in the last row  in Figure \ref{snakegraphcalculusgenus2acoeff}. 

\begin{lem} \label{lem 8}
$ L\in\cala_\bullet$.
\end{lem}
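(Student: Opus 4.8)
The plan is to mimic exactly the genus 1 argument, but now inside the larger cluster algebra $\cala_\bullet$ with $10$ initial variables. First I would identify, as in the genus 1 case, a pair of ``symmetric'' arcs $V_1,V_2$ whose associated snake graphs, when multiplied using the grafting and self-grafting resolutions of snake graph calculus \cite{CS,CS2,CS3}, produce an identity of the shape
\begin{equation*}
V_1\,V_2 = L + (\text{correction terms involving some intermediate band graph } X),
\end{equation*}
where $X$ is the band graph appearing in the second row of Figure \ref{snakegraphcalculusgenus2acoeff}. The correction terms will be (products of) honest cluster variables together with the as-yet-unknown Laurent polynomial $X$, multiplied by monomials in the coefficients $y_i$. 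This reduces the problem of showing $L\in\cala_\bullet$ to showing that the band-graph Laurent polynomial $X$ lies in $\cala_\bullet$.

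Next I would run the same trick one level down: find another pair of cluster variables $U_1,U_2$ (given by arcs with no self-crossings, so genuinely cluster variables, which one checks either geometrically or via the self-crossing-overlap criterion of \cite{CS2}) whose product resolves, again by grafting and self-grafting, into an expression of the form
\begin{equation*}
U_1\,U_2 = (\text{cluster variables and coefficient monomials}) + y_j\,X,
\end{equation*}
for some coefficient $y_j\in\mathbb{P}$. Since $y_j$ is a unit in the tropical semifield $\mathbb{P}$, this yields $X\in\cala_\bullet$, and feeding this back into the first identity gives $L\in\cala_\bullet$. In genus 2 one may need an intermediate chain of such reductions $L\rightsquigarrow X_1\rightsquigarrow X_2\rightsquigarrow\cdots$ rather than a single step, but each link is the same kind of grafting/self-grafting resolution, and the coefficient monomials are tracked via the minimal and maximal matchings as indicated by the ``min''/``max'' labels in the figure.

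The main obstacle, and the part that genuinely requires the higher-genus bookkeeping, is \emph{choosing the right arcs} $V_1,V_2,U_1,U_2$ (and any further auxiliary arcs) so that the snake graph calculus actually closes up: one needs the resolutions to produce the loop $L$ (or the intermediate band graphs) cleanly, with all other summands being products of cluster variables times coefficient units, and with no leftover band graph that cannot be peeled off. Verifying that the relevant overlaps are of grafting/self-grafting type — and not some more complicated crossing — and that the exponents of the $y_i$ work out, is the technical heart of the proof; this is precisely what Figure \ref{snakegraphcalculusgenus2acoeff} records. Once the genus 2 identities are established, the passage to arbitrary coefficients $\hat\cala$ is automatic: apply Fomin--Zelevinsky's separation-of-additions formula \cite[Theorem 3.7]{FZ4} to replace each $y_i$ by $\hat y_i$ and multiply through by the appropriate $F$-polynomials (which lie in $\hat{\mathbb{P}}\subset\hat\cala$), exactly as in the genus 1 lemma, so that $\hat L\in\hat\cala$ follows from $L\in\cala_\bullet$.
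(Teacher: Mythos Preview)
Your proposal is correct and follows essentially the same approach as the paper. One small structural point: in genus~2 the $V_1V_2$ resolution produces \emph{two} band-graph Laurent polynomials $X_1,X_2$ simultaneously, in the factored form $L + y_7(y_8X_1+x_7)(X_2+Y\,x_8)$, and the paper disposes of $X_2$ by the evident symmetry of the triangulation rather than by a further link in a chain; otherwise your outline matches the paper's argument (and your final paragraph on separation of additions is the content of the \emph{next} lemma, not Lemma~\ref{lem 8} itself).
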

\begin{proof}
{Let $V_1$ be the cluster variable obtained from $\bx$ by {the mutation sequence 8,9,10,2,1,9,4,6,3, and let $V_2$ be the cluster variable obtained from $\bx$ by the mutation sequence 7,6,5,1,2,6,3,9,4.}} Then $V_1$ is the cluster variable corresponding to the arc that crosses $8,9,10,2,1,10,4,6,3,8$ and $V_2$ is the cluster variable corresponding to the arc that crosses $7,4,9,3,5,2,1,5,6,7$.
The corresponding snake graphs
 $\calg_1$ and $\calg_2 $ are illustrated on the left hand side of the equation  in Figure \ref{snakegraphcalculusgenus2acoeff}. 
 
{{Let}} $X_i$ be the Laurent polynomial defined  by the band graph obtained from $\calg_i$, {{for $i=1,2$,}} by deleting the first and the last tile and then glueing. These band graphs are illustrated in the last row of Figure  \ref{snakegraphcalculusgenus2acoeff}.

A simple computation using snake graph calculus shows that 
\[V_1\, V_2 = L  + y_7(y_8X_1  +x_{7})(X_2 +y_1y_2y_3y_4y_5^2y_6y_7y_9\,x_{8}).\]
This computation is given in Figure \ref{snakegraphcalculusgenus2acoeff}. 
The first equation is the resolution of a grafting, and the second is the resolution of three self-graftings.
 
In order to show that $L\in \cala_\bullet$, it suffices to show that $X_1$ and $X_2$ are in the cluster algebra and, by symmetry, it is enough to show that $X_1$ is.


Let $U_1$ and $U_2$ be the Laurent polynomials associated to the snake graphs on the left hand side of the equation in Figure \ref{snakegraphcalculusgenus2ccoeff}. It follows from \cite{CS2} that none of these two snake graphs  has a self-crossing, which implies that $U_1$ and $U_2$ are cluster variables. Alternatively, we can see that $U_1$ and $U_2$ are cluster variables since each corresponds to an arc in the surface, see Figure~\ref{Arcsingenus2}. {{Indeed, the variable}} $U_1$ corresponds to the arc starting from the marked point into the triangle
with sides 3,9,8,
 then crossing in order the arcs 3,6,4,10,1,5,6,7 and ending at the marked point coming from  the  triangle with sides $B,8,7$, where $B$ denotes the boundary segment; and the variable
$U_2$  corresponds to the arc starting from the marked point into the triangle with sides $B,8,7$, then crossing in order the arcs 8,9,10,2 and ending at the marked point coming from the triangle with sides 1,2,5. 
Since these two curves do not have self-crossings, they are indeed arcs, and hence $U_1$ and $U_2$ are cluster variables. 

Another simple calculation using snake graph calculus shows that 
\[U_1U_2 =  y_1\,W_1+x_7+y_8\,X_1+y_1y_8\,W_2+ y_1y_5y_6y_7\,W_3\, x_1,\] 
where the $W_i$ are cluster variables, and their corresponding arcs are illustrated in Figure \ref{Arcsingenus2}. The computation is shown in Figure \ref{snakegraphcalculusgenus2ccoeff}. The first equality in that figure is obtained by the grafting operation \cite[Section 2.9 case 2]{CS2}, the second equation by the self-grafting operation \cite[Section 3.4]{CS2}, and in the last equation the first two terms are obtained by grafting with a single edge \cite[Section 3.3 case 3]{CS2} and the third and fourth term by self-grafting.

This shows that $X_1\in \mathcal{A}_\bullet$ and  hence that $L\in\cala_\bullet$. 
\end{proof}

\begin{lem}
 $L\in\hat\cala$.
\end{lem}
\begin{proof}
The proof is exactly the same as in the genus 1 case.
\end{proof}
\begin{figure}[ht]
\begin{center}
\scalebox{0.57} { \Large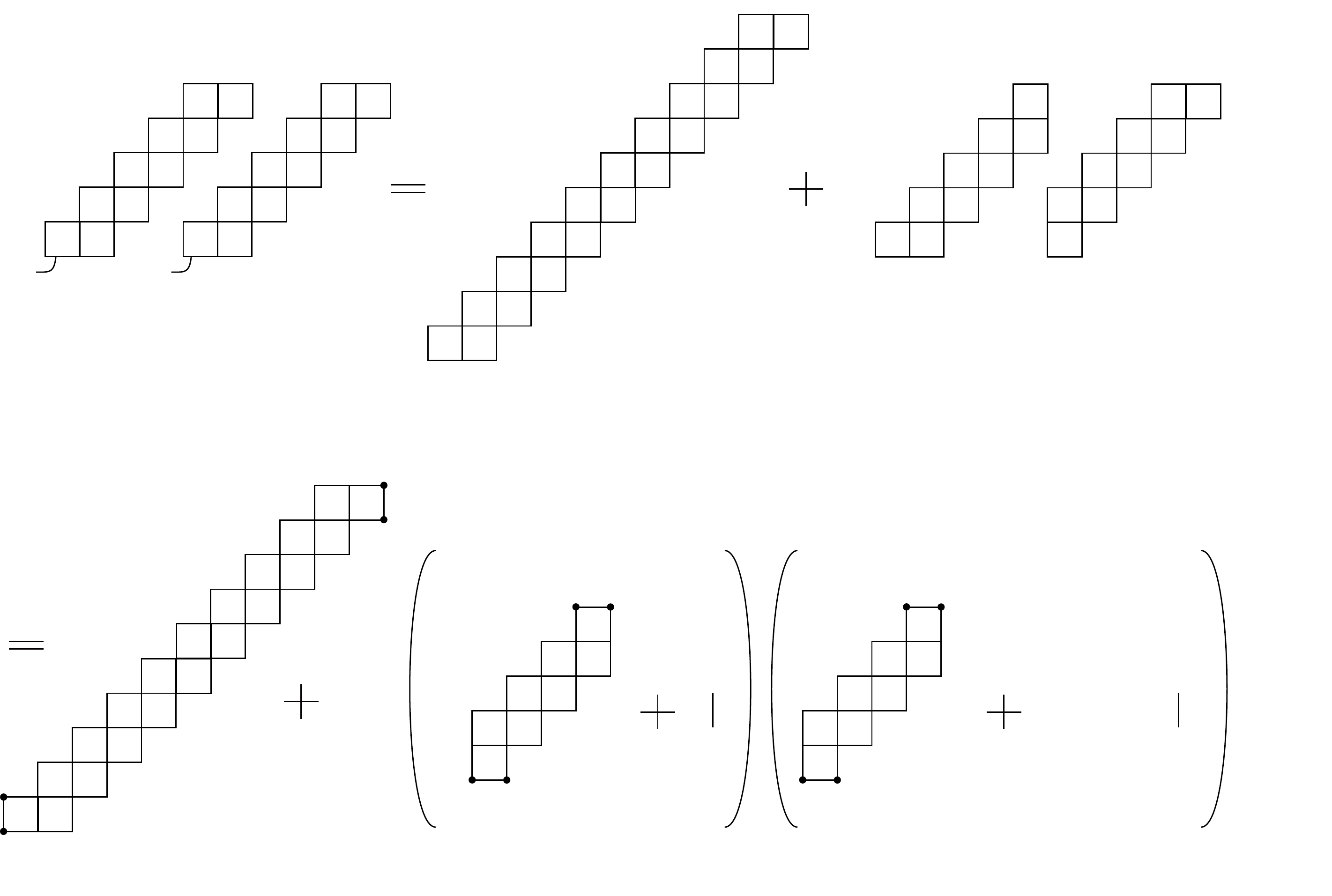}
 \caption{Snakegraph calculus used in the proof of Lemma \ref{lem 8} }
 \label{snakegraphcalculusgenus2acoeff}
 \end{center}
\end{figure}
\begin{figure}[ht]
\begin{center}
\scalebox{0.57} { \small 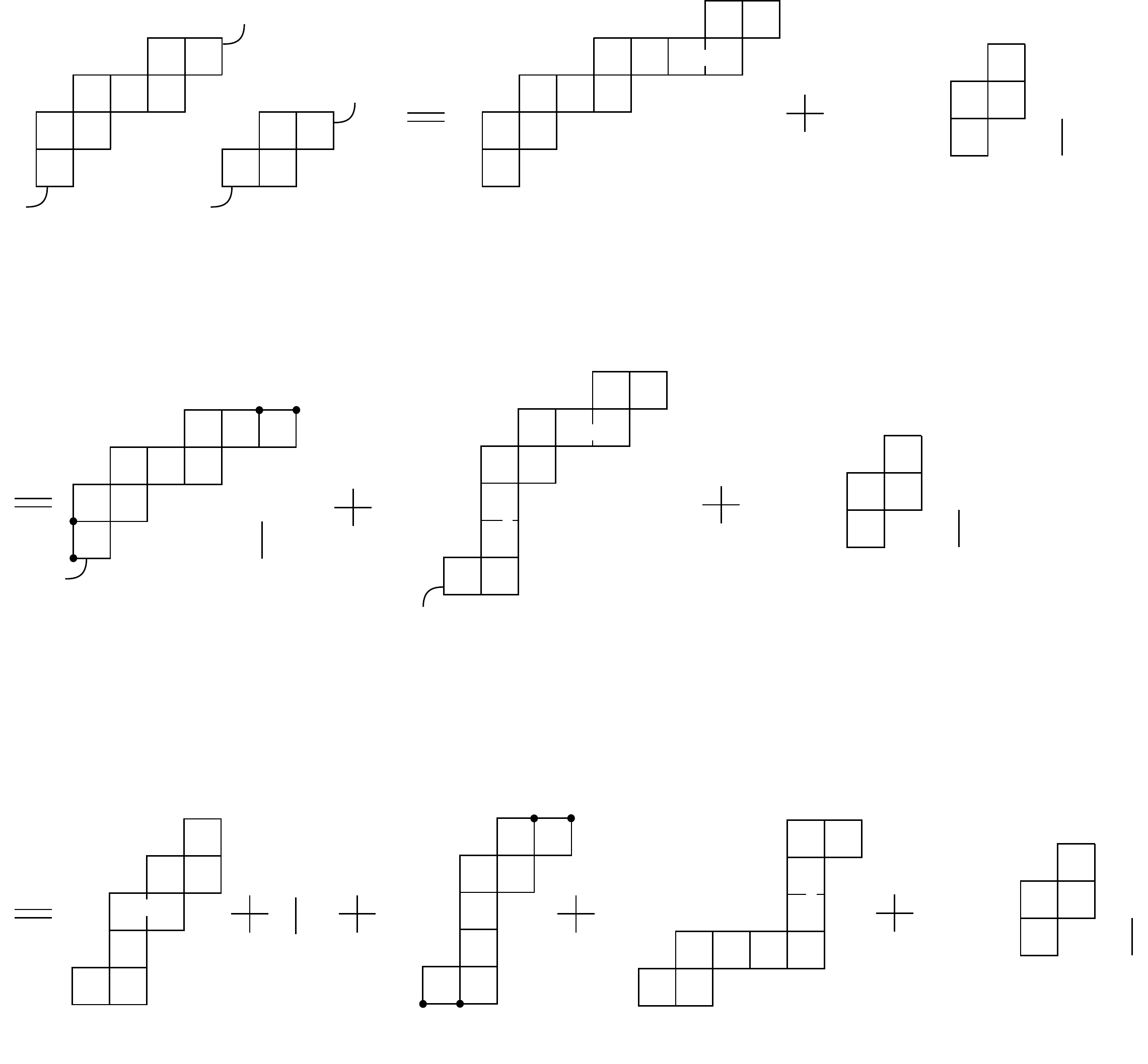}
 \caption{Snakegraph calculus showing that
 \[U_1U_2 = y_1\, W_1+x_7+ y_8\,X_1+ y_1y_8\,W_2+y_1y_5y_6y_7\,W_3\,x_1.\]}
 \label{snakegraphcalculusgenus2ccoeff}
 \end{center}
\end{figure}
\begin{figure}[ht]
\begin{center}
\scalebox{0.3} {\huge 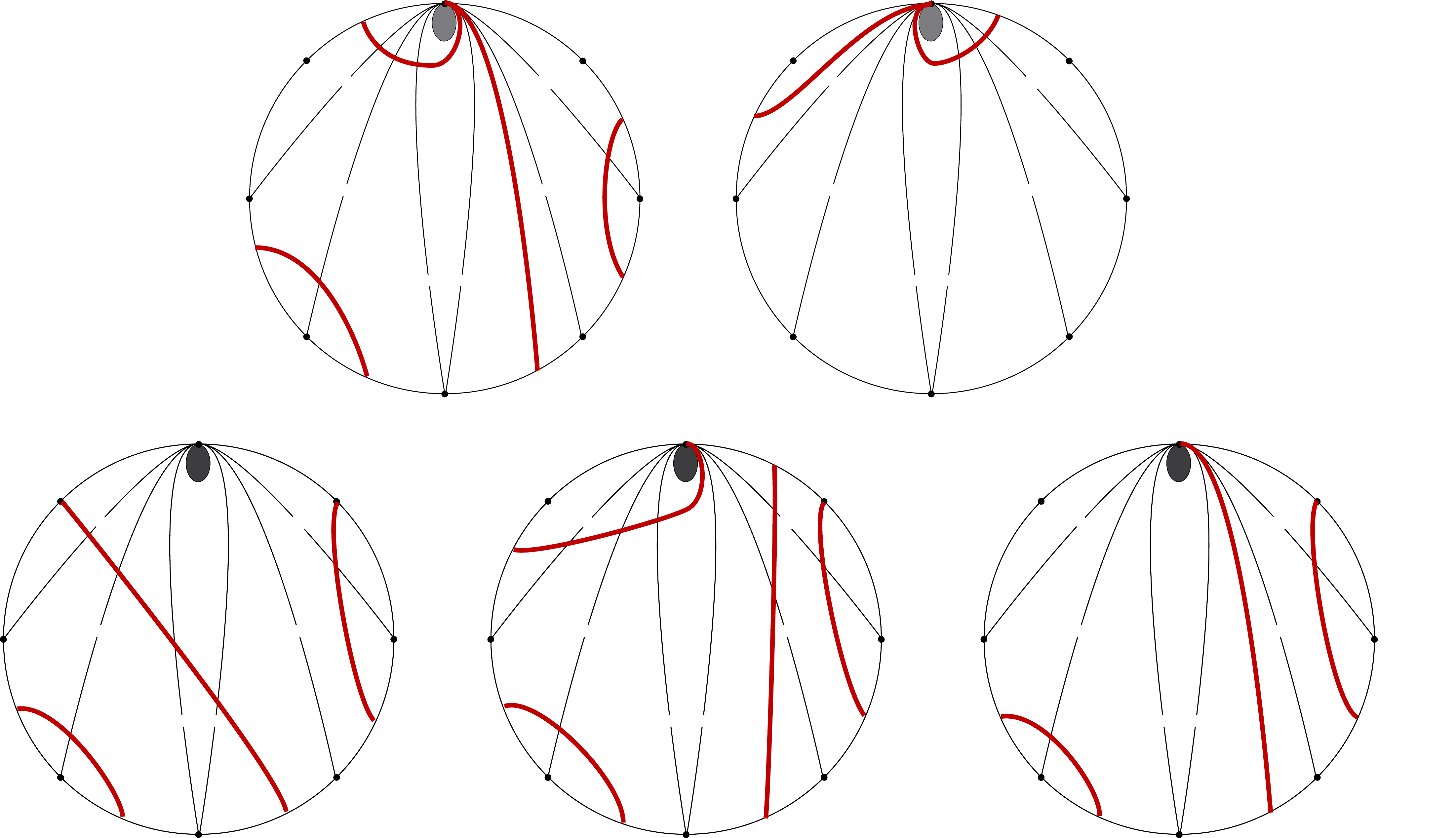}
 \caption{The arcs of the cluster variables involved in the computations in genus 2.}
 \label{Arcsingenus2}
 \end{center}
\end{figure}
%

\subsection{Higher genus}
In this section, let $(S,M)$ be a surface of genus $g>2$ with one boundary component and one marked point. Fix the triangulation {{$T$}} shown in Figure \ref{triangulationg>2}, and let $\mathbf{x}_T=(x_1,\ldots,x_{6g-2})$ be the corresponding cluster in $\cala_\bullet$. Note that the figure is for $g$ an even integer, but the case where $g$ is odd is similar; only the labeling changes slightly.

As in the genus 1 and 2 cases, let $L$ be the Laurent polynomial associated to the loop around the boundary. 

\begin{lem} 
$ L\in\cala_\bullet$.
\end{lem}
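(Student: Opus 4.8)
The plan is to repeat the strategy of the genus $1$ and genus $2$ cases, now with the longer snake and band graphs coming from the triangulation $T$ of Figure~\ref{triangulationg>2}; here $\mathbf x_T=(x_1,\dots,x_{6g-2})$. First I would pick a pair of cluster variables $V_1,V_2$ given by two symmetric arcs that start and end at the unique marked point and wind once through all the handles of $S$, so that each crosses $6g-2$ arcs of $T$ (with multiplicity), exactly as for $g=1,2$. Let $\calg_1,\calg_2$ be their snake graphs, and let $X_1,X_2$ be the Laurent polynomials of the band graphs obtained from $\calg_1,\calg_2$ by deleting the first and last tile and gluing. Resolving the product $V_1V_2$ by one grafting followed by a chain of self-graftings (\cite{CS,CS2,CS3}) should give an identity of the form
\[
V_1\,V_2 \;=\; L \;+\; y_{a}\bigl(y_{b}\,X_1+x_{a}\bigr)\bigl(X_2+m\,x_{b}\bigr),
\]
where $m$ is a monomial in the $y_i$ and $a,b$ are explicit indices depending on the labelling of $T$ (for $g=1,2$ one has $a=3,b=4$ and $a=7,b=8$). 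Since the triangulation admits a symmetry interchanging $\calg_1$ and $\calg_2$, it then suffices to show $X_1\in\cala_\bullet$.

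For this I would exhibit a second pair of cluster variables $U_1,U_2$, given by arcs of $S$ that one checks have no self-crossing (either directly in the surface, as in Figure~\ref{Arcsingenus2}, or via the self-overlap criterion of \cite{CS2}), chosen so that resolving $U_1U_2$ by a grafting, then self-graftings, and a grafting along a single edge (\cite{CS2,CS3}) yields
\[
U_1U_2 \;=\; y_{b}\,X_1 \;+\; P ,
\]
where $P$ is a nonnegative integer combination of monomials in the $y_i$ times products of cluster variables (in the genus $1$ and $2$ cases $P=x_a+y_1W_1+\cdots$ with the $W_i$ cluster variables), so that $P\in\cala_\bullet$. Since every monomial in the $y_i$ is a unit of the coefficient group $\mathbb P$, solving for $X_1$ gives $X_1\in\cala_\bullet$, hence $L\in\cala_\bullet$. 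The passage to $\hat\cala$ is then word for word the same as in genus~$1$, using Fomin--Zelevinsky's separation of addition formula.

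The main obstacle is the combinatorial verification that, for every $g$, both snake graph calculus computations really have the shapes displayed above: that the curves underlying $U_1,U_2$ are arcs (no self-crossing) for all $g$, and that the resolutions produce no spurious terms, so that $X_1$ can be isolated with a unit coefficient. I expect this to be handled by an induction on $g$: after peeling off the fixed block of tiles of $\calg_1$ corresponding to the outermost handle, the band graph defining $X_1$ in genus $g$ should be recognized as one already treated in genus $g-1$ (so that the genus $1$ and $2$ computations of Figures~\ref{snakegraphcalculusgenus1acoeff}, \ref{snakegraphcalculusgenus2acoeff} and~\ref{snakegraphcalculusgenus2ccoeff} serve as base cases), with the grafting and self-grafting moves applied one handle at a time. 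Tracking the $y$-monomials (the edge weights determined by the minimal and maximal matchings) through this induction is the most delicate bookkeeping, but it is not conceptually different from the $g\le 2$ situation.
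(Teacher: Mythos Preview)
Your proposal follows essentially the same route as the paper: choose symmetric arcs $V_1,V_2$ with zigzag snake graphs, obtain $X_1,X_2$ by deleting the extremal tiles and glueing, resolve $V_1V_2$ to get
\[
V_1V_2 = L + y_a(y_{a'}X_1+x_a)(X_2+Y\,x_{a'}),
\]
and then isolate $X_1$ from a second product $U_1U_2$ of cluster variables. The identities you predict match those the paper writes down for general $g$ (what you call $b$ is the paper's $a'$), and the $U_1U_2$ identity there reads
\[
U_1U_2 = y_1W_1 + x_a + y_{a'}X_1 + y_1y_{a'}W_2 + y_1y_py_o\cdots y_by_a\,W_3\,x_1,
\]
with $W_1,W_2,W_3$ cluster variables, exactly of the shape $y_{a'}X_1+P$ you anticipate.

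The one place you diverge is the verification step in your last paragraph. You propose an induction on $g$, peeling off one handle so that the band graph for $X_1$ in genus $g$ reduces to something already handled in genus $g-1$. This is not what the paper does, and it is not clear it would work cleanly: the $X_1$ in genus $g$ lives in the cluster algebra of the genus-$g$ surface, not the genus-$(g-1)$ one, so there is no literal reduction of algebras. The paper instead treats the computation \emph{uniformly}: because the snake graphs $\calg_1,\calg_2$ are zigzags of the same combinatorial shape for every $g$ (only longer), the grafting and self-grafting resolutions proceed identically to the $g=2$ case, tile block by tile block, and one simply writes down the resulting identity for all $g$ at once. No induction is needed; the $g=2$ proof is the template, not the base case.
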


\begin{proof}
The proof is an adaptation of the genus 2 argument. 
The cluster variables $V_1$ and $V_2$ are now given by the zigzag snake graphs shown in Figure~\ref{fig V2}. The Laurent polynomials $X_1$, $X_2$ are {{again}} given by the band graphs obtained from the snake graphs of $V_1$ and $V_2$ by deleting the first and the last tile and then glueing. Again snake graph calculus shows that 
$$V_1\, V_2 = L + y_a (y_{a'}X_1  +{{x_a}})(X_2 + Y\,x_{a'}),$$
where $Y$ is a monomial in $\mathbf{y}$.  

Finally to show that $X_1$, and hence $X_2$, is in the cluster algebra, we use the cluster variables $U_1,U_2,W_1,W_2,W_3$ analogue to the genus 2 case to get
\[U_1U_2 = y_1\,W_1+ x_a+ y_{a'}\, X_1+ y_1y_{a'}\,W_2+ y_1y_py_o \cdots y_by_a\,W_3 \,x_1.\]
This shows that $X_1\in \mathcal{A}$ and we are done.
\end{proof}

\begin{lem}
 $L\in\hat\cala$.
\end{lem}
\begin{proof}
The proof is exactly the same as in the genus 1 case.
\end{proof}

\begin{figure}[ht]
\begin{center}
\scalebox{0.6} {\Large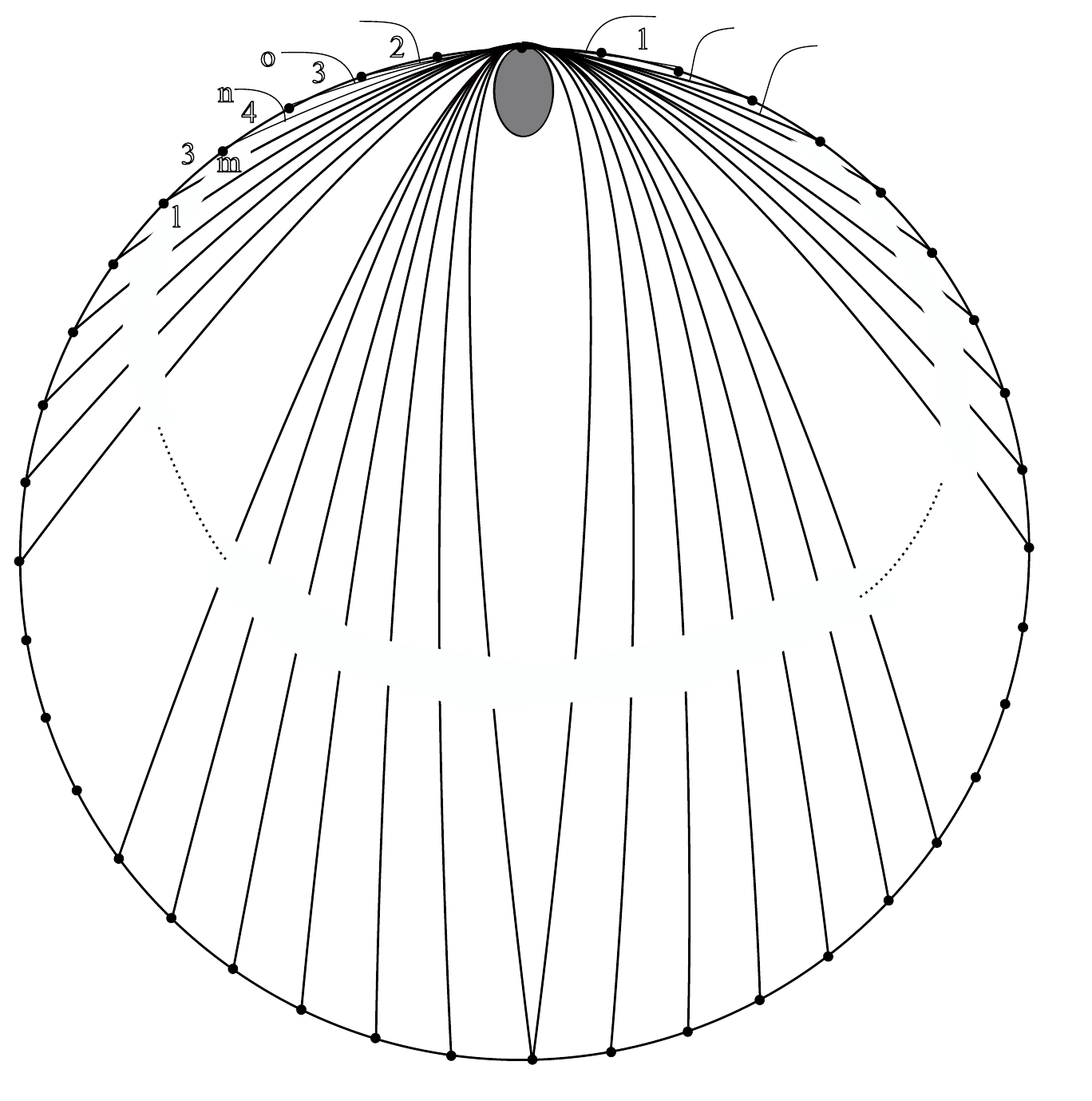}
 \caption{Triangulation of a surface of genus $g$ with one boundary component and one marked point}
 \label{triangulationg>2}
 \end{center}
\end{figure}

\begin{figure}[ht]
\begin{center}
\scalebox{.6} { \small 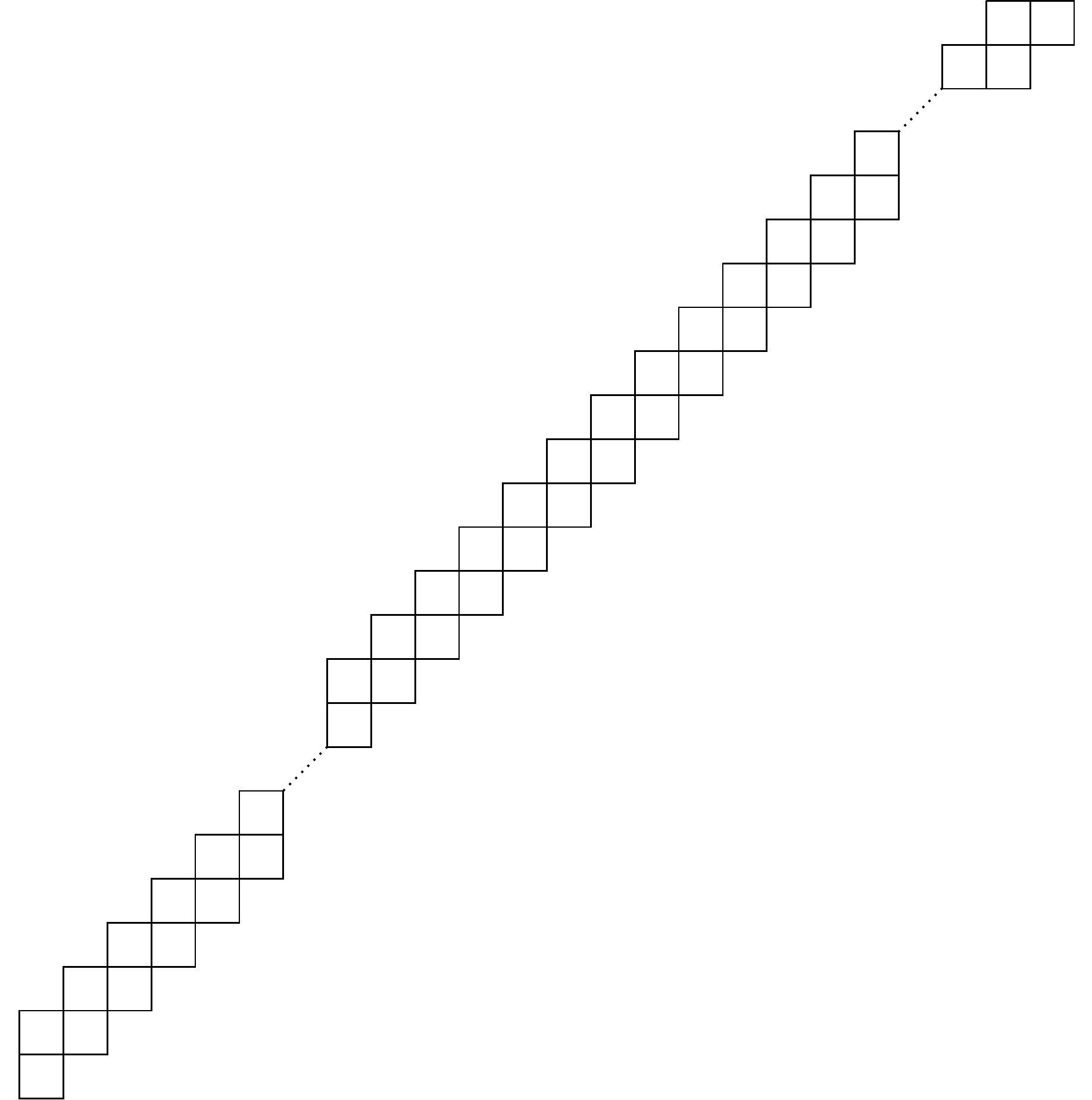 \hspace{-14cm}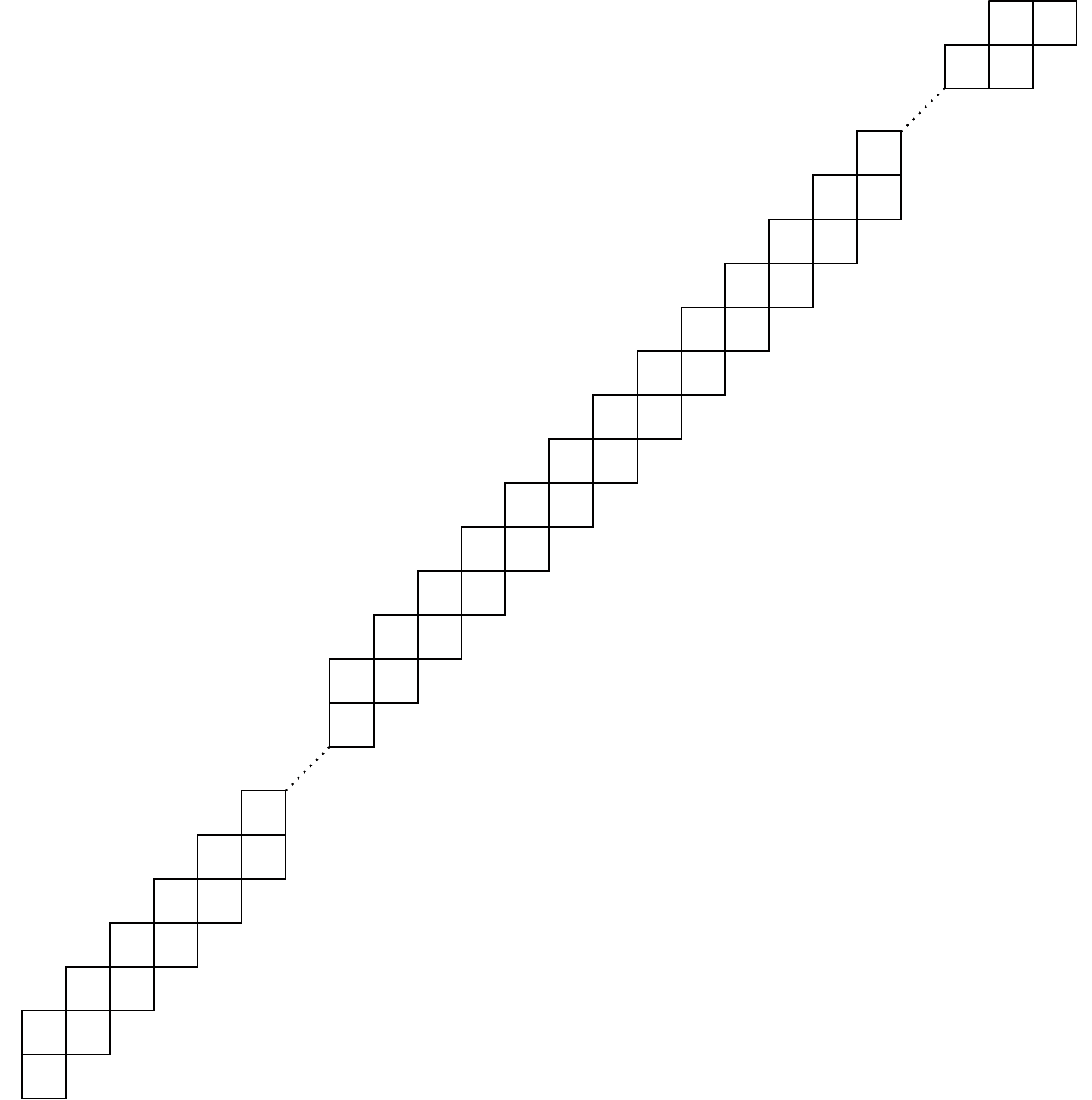}
%
%
%
%
 \caption{Snakegraphs of the cluster variables $V_1$ (left) and $V_2$ (right)}
 \label{fig V2}
 \end{center}
\end{figure}
%
%

\section{Proof of Lemma \ref{lem loop2}}\label{sect proof2}
Let $\zeta $ be an essential loop and let $x_\zeta $ be the Laurent polynomial associated to it. In view of Lemma \ref{lem loop} we may suppose that $\zeta $ is not the loop around the boundary. The proof uses a geometric argument with skein relations, and the relevant curves are illustrated in Figure \ref{fig skein}.
Choose a point $z$ on $\zeta$ and a simple curve $\za$ that goes from the marked point to the point $z$. Let $\zg$ be the arc obtained by the curve $\za\zeta\za^{-1}$. Let $\zd$ be the generalized arc that starts at the marked point, goes around the boundary twice  and then ends at the marked point.
The curves $\za,\zeta,\zd$ and $\zg$ are illustrated in the first row of Figure \ref{fig skein}. Let $x_\za,x_\zeta,x_\zd$ and $x_\zg$ be the corresponding Laurent polynomials.

We compute the product $x_\zd x_\zg$ using skein relations.  Thus each equation is obtained by smoothing a crossing of the curves. This computation is illustrated in the second and third row of Figure \ref{fig skein}.
The first term on the right hand side of the equation in the second row is a cluster variable $x_\ze$ given by the blue arc $\ze$ (the red arc is the boundary segment $B$)
and the second term on the right hand side still has a self-crossing. Applying the skein relations to that self-crossing produces the third row in the figure. In this row all red and green curves are boundary segments.
Thus we get 
\[x_\zd x_\zg = Y_1 \,x_\ze + Y_2\,x_\zeta  + Y_3\,x_\sigma \]
where $Y_i$ are some monomials in $\mathbf{y}$ and  $ x_\zg , x_\ze $ and $x_\sigma$ are cluster variables. Thus  in order to show that $x_\zeta\in \cala$, it suffices to show that $x_\zd\in \cala$. 
But it is shown in the last row of Figure \ref{fig skein} that $x_\zd = L+ 0$, where $L$ (in blue) is the essential loop around the boundary. Now the result follows from Lemma~\ref{lem loop}.
\qed  

\begin{figure}[ht]
\begin{center}
\scalebox{0.7} { \Large 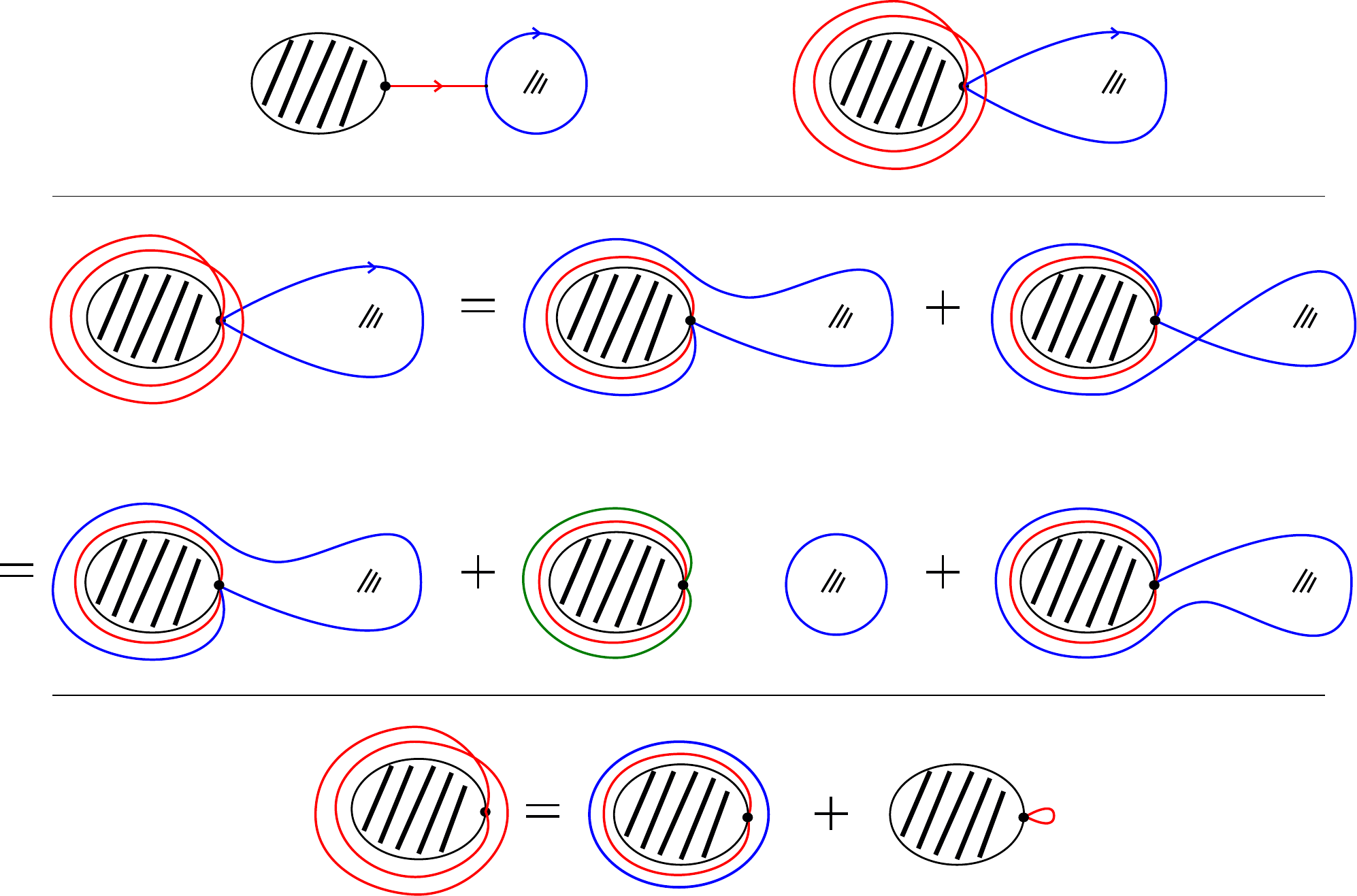}
 \caption{Proof of Lemma \ref{lem loop2}}
 \label{fig skein}
 \end{center}
\end{figure}

\section{Punctured surfaces with boundary}
In this section, we show how to reduce the case of punctured surfaces with exactly one marked point on the boundary to the case of surfaces with two marked points on the boundary.

Let $(S,M)$ be a punctured surface with boundary and one marked point on the boundary. Let $\cala$ be its cluster algebra and $\calu$ the corresponding upper cluster algebra.

\begin{prop}
 \label{prop 4} 
 $\cala =\calu$.
\end{prop}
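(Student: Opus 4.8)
The plan is to reduce the statement to the case of a surface with two marked points on the boundary, where $\cala=\calu$ is already known, and then to transport the equality back down using the behaviour of cluster algebras under cutting along an arc, as developed around the Louise property in \cite{MSp}. Concretely, I would first add one marked point $Q$ to the boundary of $(S,M)$, next to the existing boundary marked point $P$, obtaining a punctured surface $(S,M')$ with at least two marked points on the boundary. By \cite{M2} the cluster algebra $\cala(S,M')$ is locally acyclic, and in particular $\cala(S,M')=\calu(S,M')$.

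Next I would choose a convenient arc to cut along. Let $\partial_0$ be the boundary component carrying $P$ and $Q$, let $b_1,b_2$ be the two boundary segments between them, and let $\delta$ be the loop based at $Q$ encircling $\partial_0$, pushed slightly into the interior. Then $\delta$ cuts off a triangle with sides $\delta,b_1,b_2$, so $\delta$ occurs in some triangulation of $(S,M')$ (normalize using \cite{FST}) and $x_\delta$ is a cluster variable of $\cala(S,M')$. Cutting $(S,M')$ along $\delta$ in the sense of \cite{MSp} disconnects the surface into that triangle, whose cluster algebra is trivial, and a copy of $(S,M)$ in which the cut image of $\delta$ is exactly the unique boundary segment of $\partial_0$. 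On the level of algebras this identifies the cluster localization $\cala(S,M')[x_\delta^{-1}]$ with $\cala(S,M)$ after adjoining one invertible frozen variable (the cut variable), up to a trivial polynomial factor.

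Finally I would descend the equality. Cluster localization preserves local acyclicity, so $\cala(S,M')[x_\delta^{-1}]$ is locally acyclic and hence equals its own upper cluster algebra. By the previous step this says precisely that $\cala(S,M)$, after adjoining an invertible frozen variable, equals its upper cluster algebra; since adjoining such a variable commutes with the formation of both $\cala$ and $\calu$ and can be reversed by passing to the degree-zero part in that variable, the equality $\cala(S,M)=\calu(S,M)$ follows. For a nontrivial coefficient system one runs the same argument while keeping track of the boundary coefficient of $\partial_0$, which corresponds to $x_\delta$ under the cut, and nothing essential changes.

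The main obstacle is the middle step: checking that the cutting operation of \cite{MSp}, applied to $\delta$, yields exactly $(S,M)$ together with a trivial factor, with the coefficient bookkeeping matching up, so that the localization statement for $\cala(S,M')$ translates without loss into a statement about $\cala(S,M)$ itself. Once that correspondence is in place, the topological normalization (\cite{FST}) and the transfer of local acyclicity through localization (\cite{M2,MSp}) are routine.
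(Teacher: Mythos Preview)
Your approach is genuinely different from the paper's, and the overall strategy is sound, but the final reduction step is not as innocuous as you suggest.

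\textbf{What the paper does.} The paper exploits the puncture: it picks a puncture $p_1$, two arcs $\alpha,\beta$ from the boundary point $p_0$ to $p_1$, and the enclosing loop $\gamma$. In the quiver of any triangulation containing $\alpha,\beta,\gamma$, the vertices $\alpha$ and $\beta$ are a source and a sink (the two $\alpha$--$\beta$ arrows coming from the two triangles cancel, leaving only one arrow each to $\gamma$). Deleting $\alpha,\beta$ is then a step in the Louise recursion, and the resulting quiver is that of a surface with one fewer puncture and one more boundary marked point; induction finishes. This is a purely quiver-level argument, so no coefficient bookkeeping arises.

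\textbf{What you do, and where the gap is.} You instead add a boundary point to obtain $(S,M')$, invoke local acyclicity there, and then cut along $\delta$. The topological identification of the cut with $(S,M)\sqcup(\text{triangle})$ is correct, and it is true that cluster localizations of locally acyclic cluster algebras are locally acyclic. The problem is the last sentence: after cutting, the variable $x_\delta$ is \emph{not} an extraneous invertible variable adjoined to $\cala(S,M)$; it is the boundary coefficient of the unique boundary segment of $(S,M)$, and it enters the exchange relations non-trivially. There is no $\mathbb{Z}$-grading in which $x_\delta$ has degree $1$ and all mutable cluster variables have degree $0$ (any flip across an arc adjacent to the boundary would be inhomogeneous), so ``passing to the degree-zero part'' does not recover the coefficient-free $\cala(S,M)$ and $\calu(S,M)$. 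Equivalently, specializing $x_\delta\mapsto 1$ gives a surjection $\cala(S,M;x_\delta)\to\cala(S,M)$, but the corresponding map on upper cluster algebras is not obviously surjective, so $\cala=\calu$ with the boundary coefficient does not formally imply $\cala=\calu$ with trivial coefficients.

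The fix is to work at the level of the mutable quiver rather than the algebra: what your cutting argument really shows is that the mutable quiver of $(S,M)$ arises by freezing one vertex in a locally acyclic quiver, and one then needs the statement that local acyclicity (in Muller's sense) depends only on the mutable part of the seed. This is true but should be cited or argued explicitly; it does not follow from the ``adjoin and take degree zero'' picture you sketched. Once that is in place your proof goes through, and in fact it never uses the puncture --- so it would equally give an alternative proof of Theorem~\ref{thm 2} for the unpunctured case, bypassing the basis construction. The paper's route via explicit sink/source vertices is shorter for Proposition~\ref{prop 4} precisely because the Louise recursion is manifestly a quiver property, so the coefficient issue never appears.
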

\begin{proof}
 By \cite{MSp} it suffices to show that there is a triangulation whose quiver satisfies the Louise property. Let $p_0$ be the marked point on the boundary and let $p_1$ be a puncture. Let $\alpha,\beta$ be  two arcs going from $p_0$ to $p_1$ and $\gamma$ be the arc from $p_0$ to $p_0$ going around the puncture $p_1$ such that $\gamma$ cuts the surface into two pieces one of which  contains the boundary, the puncture $p_1$ and no other punctures, see {the picture on the left hand side of Figure \ref{fig puncture}.
If the genus of $S$ is zero and $p_1$ is the only puncture, then the cluster algebra is of type $\mathbb{A}_1$ and $\cala=\calu$. 

Otherwise,  $\gamma $ is not contractible, hence an arc,
and we can choose} a triangulation $T$ containing the arcs $\alpha,\beta,\gamma$. Then in the corresponding
 quiver $Q_T$ the vertices $\alpha$ and $\beta$ are sinks or sources. 
Now the quiver obtained by removing the vertices $\alpha$ and $\beta$ from the quiver $Q_T$ is the same as the quiver $Q_{T'}$ of the triangulation $T' =T-\{\alpha,\beta \}$ obtained by deleting the arcs $\alpha$ and $\beta$, of the surface $(S',M')$ obtained by replacing the puncture $p_1$ by a second point on the boundary, {see the picture on the right hand side of Figure \ref{fig puncture}.} By induction, $Q_{T'}$ is Louise since $(S',M')$  has two boundary points and one puncture less than $(S,M)$.
\begin{figure}[ht]
\begin{center}
\scalebox{0.7} { \Large 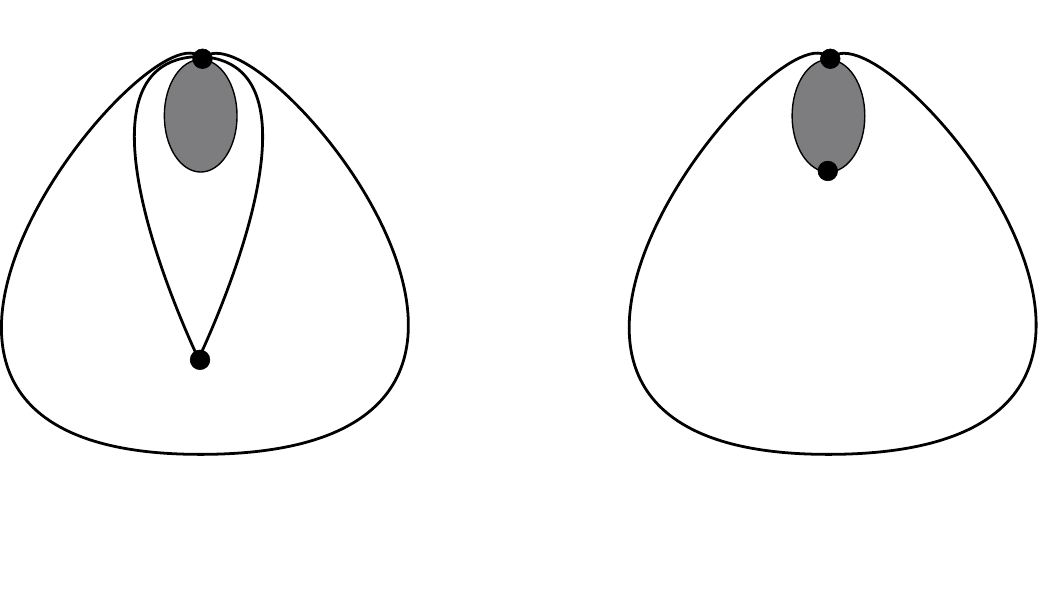}
 \caption{Proof of Proposition \ref{prop 4}}
 \label{fig puncture}
 \end{center}
\end{figure}
\end{proof}
{}

\end{document}